\newtheorem{theorem}{Theorem}[section]
\newtheorem{lemma}[theorem]{Lemma}
\newtheorem{corollary}[theorem]{Corollary}
\newtheorem{proposition}[theorem]{Proposition}
\theoremstyle{definition}
\newtheorem{definition}[theorem]{Definition}
\newtheorem{example}[theorem]{Example}
\theoremstyle{remark}
\newtheorem{remark}[theorem]{Remark}
\newcommand{\cC}{{\mathcal C}}
\newcommand{\cH}{{\mathcal H}}
\newcommand{\cI}{{\mathcal I}}
\newcommand{\cL}{{\mathcal L}}
\newcommand{\cP}{{\mathcal P}}
\newcommand{\cQ}{{\mathcal Q}}
\newcommand{\bR}{{\mathbb{R}}}
\newcommand{\bC}{{\mathbb{C}}}
\newcommand{\bN}{{\mathbb{N}}}
\newcommand{\bZ}{{\mathbb{Z}}}
\newcommand{\fB}{{\mathfrak{B}}}
\newcommand{\Tr}{\mathrm{Tr}}
\newcommand{\jed}{{\mathbb{I}}}
\newcommand{\un}{1\mkern -4mu{\rm l}}
\newcommand{\A}{{\mathcal A}}
\newcommand{\U}{{\mathcal U}}
\def\<{\langle}
\def\>{\rangle}
\def\ot{\otimes}
\def\ep{\epsilon}
\newcommand{\be}{\begin{equation}}
\newcommand{\ee}{\end{equation}}
\def\Pr{\tilde P}
\newcommand{\ben}{\begin{eqnarray}}
\newcommand{\een}{\end{eqnarray}}
\title{Bell and steering scenarios in terms of operator systems}
\author[M. Marciniak]{Marcin Marciniak}
\address{Institute of Theoretical Physics and Astrophysics, Gda{\'n}sk University,
Wita Stwosza 57, 80-952 Gda{\'n}sk, Po\-land}
\email{matmm@ug.edu.pl}
\author[M. Horodecki]{Micha{\l} Horodecki}
\address{Institute of Theoretical Physics and Astrophysics, Gda{\'n}sk University,
Wita Stwosza 57, 80-952 Gda{\'n}sk, Po\-land}
\email{fizmh@ug.edu.pl}\date{\today}
\author{Zhi Yin}
\address{School of Mathematics and Statistics, Wuhan University, Wuhan 430072, China; Institute of Theoretical Physics and Astrophysics, Gda{\'n}sk University, Wita Stwosza 57, 80-952 Gdansk, Poland}
\email{hustyinzhi@163.com}
\keywords{operator system, tensor product, corelation box}
\subjclass[2000]{46L06, 46L05, 47L25.}
\begin{document}
\maketitle
\begin{abstract}
The aim of this paper is to indicate possible applications of operator systems in qualitative description of varoius scenarios while studying non-locality. To this end we study in details the notion of generalized non-commuting cube. Following ideas of \cite{F1} and \cite{FKPT} we show in systematic way that various classes of Tsirelson's correlation boxes as well as NPA hierarchies can be described by using various operator system tensor products of generalized non-commuting cubes. Moreover, we show also that noncommuting cubes can be applied for the description of steering assemblages. 
Next we study some aproximation properties of noncommuting cubes by finite dimensional models. Finaly, we indicate possibility to use the framework operator systems for studying Bell and steering inequalities.
\end{abstract}
\baselineskip 12.5pt


\section{Operator systems and their tensor products}
In this section we briefly recall some basic notion in the theory of operator spaces and their tensor products.
\subsection{The category of operator systems}
Let $V$ be a complex vector space. We say that $V$ is a \textit{$^*$-vector space} if there is an antilinear involution $V\ni v\mapsto v^*\in V$. By $V_h$ we denote the real space of all elements satisfying $v^*=v$. By an \textit{ordered $^*$-vector space} we mean a pair $(V,V^+)$ consisting of a $^*$-vector space and a convex cone $V^+\subset V_h$ such that $V^+\cap(-V^+)=\{0\}$. Elements of $V_h$ are called \textit{hermitian elements} while elements of $V^+$ are called \textit{positive elements}.

An element $e\in V_h$ is called an \textit{order unit} if for any $v\in V_h$ there is a positive number $r$ such that $re-v\in V^+$. An order unit $e$ is called \textit{Archimedean} if $V^+$ contains all elements $v\in V$ such that $re+v\in V^+$ for every $r>0$.

We let $M_{n,m}(V)$ be the linear space of all matrices $(v_{ij})_{i=1,2,\ldots,n;\,j=1,2,\ldots,m}$ with entries in $V$. We will write $M_n(V)$ instead of $M_{n,n}(V)$. Let us notice that $M_n(V)$ can be equipped with the antilinear involution $M_n(V)\ni (v_{ij})\mapsto (v_{ji}^*)\in M_n(V)$. Hence it has a structure of a $^*$-vector space too. 

Given $n\in\bN$, let $\cP_n\subset M_n(V)_h$ be a covex cone. We say that the sequence of cones $\{\cP_n\}_{n=1}^\infty$ is a \textit{matrix ordering} on $V$ if $(M_n(V),\cP_n)$ is an ordered space for each $n\in\bN$ and $X^*\cP_nX\subset \cP_m$ for any $m,n\in\bN$ and $X\in M_{n,m}(\bC)$. The pair $(V,\{\cP_n\}_{n=1}^\infty)$ is called a \textit{matrix ordered vector space}.

An element $e\in V_h$ is called a \textit{matrix order unit} for the matrix ordered space $(V,\{\cP_n\}_{n=1}^\infty)$ if \be e_n:=\left(\begin{array}{cccc}e&&&\\&e&&\\&&\ddots&\\&&&e\end{array}\right)\ee is an order unit for $(V,\cP_n)$ for any $n\in\bN$. Respectively, it is called an \textit{Archimedean matrix order unit} for $(V,\{\cP_n\}_{n=1}^\infty)$ if $e_n$ is an Archimedean order unit for $(V,\cP_n)$ for any $n\in\bN$.

A triple $(V,\{\cP_n\}_{n=1}^\infty,e)$ is called an \textit{operator system} if $V$ is a complex $^*$-vector space, $\{\cP_n\}_{n=1}^\infty$ is a matrix ordering on $V$ and $e\in V_h$ is an Archimedean matrix order unit.

If $V,W$ are vector spaces and $\phi:V\to W$ is a linear map, then for each $n\in\bN$, we let $\phi^(n):M_n(V)\to M_n(W)$ denote a linear map given by $\phi^{(n)}((v_{ij})_{i,j=1,\ldots,n})=(\phi(v_{ij}))_{i,j=1,\ldots,n}$. If 
$(V,\{\cP_n\}_{n=1}^\infty)$ and $(W,\{\cQ_n\}_{n=1}^\infty)$ are matrix ordered spaces, a map $\phi:V\to W$ is called \textit{completely positive} if $\phi^{(n)}(\cP_n)\subset\cQ_n$ for each $n\in\bN$. We call a linear map $\phi:V\to W$ a \textit{complete order isomorphism} if $\phi$ is invertible and both $\phi$ and $\phi^{-1}$ are completely positive.

Let us consider the following example. Given a Hilbert space $\cH$ let $\fB(\cH)$ denote the $C^*$-algebra of all bounded operators on $\cH$. Let $S\subset\fB(\cH)$ be a linear subspace such that $S^*=S$ and $\jed\in S$. It is easy to observe that $S$ is a $^*$-vector space with respect to the adjoint operation. If we let $S^+=S\cap\fB(\cH)^+$ then $(S,S^+)$ has a structure of an ordered space which has $\jed$ (the identity operator) as an Archimedean unit. Moreover, $M_n(S)\subset M_n(\fB(\cH))\simeq\fB(\cH^n)$, hence $M_n(S)$ inherits an involution and order structure from $\fB(\cH^n)$ and has the $n\times n$ diagonal matrix
\be \left(\begin{array}{cccc}\jed&&&\\&\jed&&\\&&\ddots&\\&&&\jed\end{array}\right)\ee 
as an Archimedean order unit. Summing up, $S$ has a structure of an operator system. We call it a \textit{concrete operator system}.

The following theorem of Choi and Effros shows that each operator system is completely order isomorphic to some concrete operator system.
\begin{theorem}[Choi-Effros]
If $(V,\{\cP_n\}_{n=1}^\infty,e)$ is an operator system, then there exists a Hilbert space $\cH$, a concrete operator system $S\subset\fB(\cH)$, and a complete order isomorphism $\phi:V\to S$ such that $\phi(e)=\jed$.
\end{theorem}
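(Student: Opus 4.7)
The plan is to construct a Hilbert space $\cH$ and a unital complete order embedding $\phi:V\to\fB(\cH)$ by aggregating sufficiently many finite-dimensional unital completely positive (UCP) maps into matrix algebras. The overall strategy follows the classical Gelfand--Naimark philosophy: points of $V$ are separated by a large enough family of "matrix states", and the direct sum of these states realizes $V$ concretely on a Hilbert space.

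First I would exploit the Archimedean order unit $e$ to produce a norm. The functional $\|v\|_e:=\inf\{r>0:re\pm v\in V^+\}$ is an order-unit norm on $V_h$ (the Archimedean condition is exactly what prevents the seminorm from being degenerate), and it extends to a norm on $V$ in the usual way. Using this norm and a standard Hahn--Banach separation argument in the real locally convex space $V_h$, I would show that for every $v\in V_h\setminus V^+$ there is a state, i.e.\ a linear functional $f:V\to\bC$ with $f(V^+)\subset[0,\infty)$ and $f(e)=1$, satisfying $f(v)<0$. Thus states separate positive from non-positive hermitian elements.

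Next comes the genuinely matricial step, which is the main obstacle. I need to upgrade "states separate the cone" to "UCP maps into $M_n$ separate the matrix cone $\cP_n$". Concretely, given $X=(v_{ij})\in M_n(V)_h\setminus\cP_n$, I must produce a UCP map $\phi:V\to M_n$ with $\phi^{(n)}(X)\not\in M_n(M_n)^+$. The key observation is the duality between UCP maps $\phi:V\to M_n$ and positive linear functionals $s$ on $M_n(V)$ normalized by $s(e_n)=1$: set $s((v_{ij}))=\tfrac{1}{n}\sum_{i,j}[\phi(v_{ij})]_{ij}$, and conversely recover $\phi$ from $s$ via $[\phi(v)]_{ij}=n\,s(E_{ij}\otimes v)$, where $E_{ij}$ are the matrix units. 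Using the Archimedean matrix order unit property of $e_n$ together with Hahn--Banach separation applied to $M_n(V)_h$ and the closed cone $\cP_n$, I obtain a suitable positive matrix state, which via this correspondence yields the required UCP map. Checking that the cones $\cP_n$ are appropriately closed in the order-unit topology (another consequence of the Archimedean condition) is the technical heart of this step.

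Finally, I would assemble everything. Let $\cI$ be the collection of all pairs $(n,\phi)$ with $n\in\bN$ and $\phi:V\to M_n$ UCP, set $\cH=\bigoplus_{(n,\phi)\in\cI}\bC^n$, and define $\Phi:V\to\fB(\cH)$ by $\Phi(v)=\bigoplus_{(n,\phi)}\phi(v)$. Then $\Phi$ is UCP with $\Phi(e)=\jed$, and the separation step above guarantees that for each $n$ and each $X\in M_n(V)_h\setminus\cP_n$, some summand of $\Phi^{(n)}(X)$ fails to be positive, so $\Phi^{(n)}(X)\notin M_n(\fB(\cH))^+$. Hence $\Phi$ is a complete order isomorphism onto its image $S:=\Phi(V)\subset\fB(\cH)$, which by construction is a concrete operator system containing $\jed$, completing the proof.
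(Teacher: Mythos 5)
The paper offers no proof of this statement --- it is quoted as a classical result of Choi and Effros --- so your sketch can only be compared with the standard argument, and that is indeed the argument you are reconstructing: separate the matricial cones $\cP_n$ by matrix states via Hahn--Banach and the Archimedean property, convert matrix states into completely positive maps into $M_n(\bC)$ through the functional-to-map correspondence, and take a direct sum over a separating family. The architecture and all the named ingredients are correct.

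There is, however, one step that fails as literally written: the passage from the separating positive functional $s$ on $M_n(V)$ with $s(e_n)=1$ to ``the required UCP map''. The correspondence $[\phi_s(v)]_{ij}=n\,s(E_{ij}\otimes v)$ produces a completely positive map, but $\phi_s(e)=n\,\bigl(s(E_{ij}\otimes e)\bigr)_{ij}$ is merely a positive matrix of trace $n$: the normalization $s(e_n)=1$ constrains its trace, not its being the identity, so $\phi_s$ need not be unital. Your final assembly genuinely depends on unitality --- you index $\cH$ by \emph{unital} CP maps precisely so that $\Phi(e)=\jed$ --- so either the separating maps are excluded from the index set and the complete-order-isomorphism claim collapses, or non-unital maps are admitted and $\Phi(e)\neq\jed$. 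The standard repair is to insert a compression lemma: applying $\phi_s^{(2)}$ to $\left(\begin{smallmatrix} e & v\\ v^{*} & e\end{smallmatrix}\right)\in\cP_2$ for contractions $v$ shows $\phi_s(v)=P\phi_s(v)P$, where $P$ is the support projection of $H:=\phi_s(e)$; then $v\mapsto H^{-1/2}\phi_s(v)H^{-1/2}$ (pseudo-inverse), viewed as a map into $PM_n(\bC)P\cong M_k(\bC)$, is unital and completely positive, and conjugation by the operator $H^{-1/2}\oplus\cdots\oplus H^{-1/2}$, invertible on the range of $P\oplus\cdots\oplus P$, preserves the non-positivity of $\phi_s^{(n)}(X)$. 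With that lemma supplied, the remaining steps of your outline (the order-unit seminorm, the separation argument, complete positivity and boundedness of the direct sum) go through as in the textbook proof.
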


\subsection{Duality}
Given an operator system $V$ we can consider its dual $V^\mathrm{d}$. It has a natural matrix order structure. The involution in $V^\mathrm{d}$ is defined by $\phi^*(v)=\overline{\phi(v^*)}$ for $\phi\in V^\mathrm{d}$ while the matrix order structure described by saying that $(\phi_{ij})\in M_k(V^\mathrm{d})$ is positive if and only if the map 
\be V\ni v\mapsto (\phi_{ij}(v))\in M_k(\bC)\ee 
is completely positive.
The crucial point for us is that for finite dimensional operator system $V$ the matrix oredered space $V^\mathrm{d}$ is an operator system (\cite{CE}).

\subsection{Quotients}
\label{s:quot}
If $\varphi:V\to W$ is a unital completely positive map between two operator systems and $J=\ker\varphi$, then $V/J$ has a natural matrix order structure
\be \mbox{$(s_{ij}+J)_{i,j}\in M_k(V/J)^+$ if and only if $(s_{ij})_{i,j}\in M_k(V)^+$}.\ee 
We will say that $J\subset V$ is a null-subspace if $J^*=J$ and $J$ contains no positive elements other than zero. It was shown that if $J$ is a null-subspace then $V/J$ becomes an operator system with an order unit $e+J$ (see \cite[Remark 1.2]{Kav}).
\subsection{Tensor products}
Now, let $(S,\{P_n\}_{n=1}^\infty,e)$ and $(T,\{Q_n\}_{n=1}^\infty,f)$ be two operator systems. Let $S\otimes T$ denote the algebraic tensor product of linear spaces $S$ and $T$. An \textit{operator system structure on $S\otimes T$} is a family $\tau=\{R_n\}_{n=1}^\infty$ of cones, where $R_n\subset M_n(S\otimes T)$, satisfying:
\begin{enumerate}
\item
$(S\otimes T,\{R_n\}_{n=1}^\infty,e\otimes f)$ is an operator system denoted $S\otimes_\tau T$,
\item
$P_n\otimes Q_m\subset R_{nm}$ for $n,m\in\bN$,
\item
If $\phi:S\to M_n(\bC)$ and $\psi:T\to M_m(\bC)$ are unital completely positive maps, then $\phi\otimes\psi:S\otimes_\tau T\to M_{nm}(\bC)$ is a unital completely positive map.
\end{enumerate}

The following three tensor products were introduced in \cite{FKPT} :
\begin{itemize}
\item
\textit{minimal tensor product} $S\otimes_{\mathrm{min}}T$,
\item
\textit{maximal tensor product} $S\otimes_{\mathrm{max}}T$,
\item
\textit{commuting tensor product} $S\otimes_{\mathrm{c}}T$.
\end{itemize}

Suppose that $S_1\subset T_1$ and $S_2\subset T_2$ are inclusions of operator systems and $\iota_i:S_i\to T_i$ are inclusion maps for $i=1,2$. We write $S_1\otimes_\tau S_2\subset^+ T_1\otimes_\sigma T_2$ when $\iota_1\otimes\iota_2:S_1\otimes_\tau S_2\to T_1\otimes_\sigma T_2$ is a completely positive map. If, in addition, the map $\iota_1\otimes\iota_2$ is a complete order isomorphism onto its range, then we write $S_1\otimes_\tau S_2\subset_\mathrm{coi} T_1\otimes_\sigma T_2$. 
 
\subsection{Coproduct of operator systems}
Let us recall the definition of unital free product of $C^*$-algebras. Assume that $A_1,A_2,\ldots,A_m$ are unital $C^*$-algebras. The unital free product $A_1*_1A_2*_1\ldots *_1A_n$ is a unital $C^*$-algebra with injective unital $^*$-homomorphisms $\iota_k:A_k\to A_1*_1A_2*_1\ldots *_1A_n$, $k=1,\ldots,m$, such that for any $C^*$-algebra $B$ and any unital $^*$-homorphisms $\rho_k:A_k\to B$ there is a unique unital $^*$-homomorphism $\gamma: A_1*_1A_2*_1\ldots *_1A_n\to B$ such that $\rho_k=\gamma\circ\iota_k$ for $k=1,\ldots,m$.

Given an operator system $V$ one defines its universal $C^*$-algebra $C_u^*(V)$. It has the following universality property: There is a unital complete order emebedding $\iota:V\to C_u^*(V)$ and for every unital $C^*$-algebra $B$ and every unital completely positive map $\phi:V\to B$ there is a unique unital $^*$-homomorphism $\pi:C_u^*(V)\to B$ such that $\phi=\pi\circ\iota$ (\cite{KW}).

Now, given operator systems $V_1,\ldots,V_m$ we can define their coproduct 
\begin{eqnarray}
\lefteqn{V_1\oplus_1
\ldots\oplus_1V_m=}\nonumber \\
&=&\{v_1+
\ldots+v_m:\,v_k\in V_k,\,k=1,\ldots,m\}\subset C_u^*(V_1)*_1
\ldots *_1C_u^*(V_m).
\end{eqnarray}
\begin{remark}
\label{r:univ}
When we combine universality properties of the free unital product of $C^*$-algebras and universal algebras we get the following functorial characterization of the coproduct: an operator system $U$ is a coproduct of operator systems $V_1,\ldots,V_m$ if and only of there are unital completely order embeddings $\iota_k:V_k\to U$, $k=1,\ldots,m$ and for every operator system $R$ and every unital completely positive maps $\phi_k:V_k\to R$ there is a unique unital completely positive map $\psi:U\to R$ such that $\phi_k=\psi\circ\iota_k$ (\cite{F1,Kav}).
\end{remark}

Using the similar line of argumentation as in \cite{Kav} one can easily show the following generalization of \cite[Proposition 4.7]{Kav} (see also \cite[Proposition 3.4]{F1}).
\begin{proposition}
\label{p:copr}
The operator system $V_1\oplus_1\ldots\oplus_1V_m$ is unitally completely order isomorphic to $V_1\oplus\ldots\oplus V_m/\mathcal{I}$ where
\be \cI=\left\{(\alpha_1e_1,\ldots,\alpha_me_m)\in V_1\oplus\ldots\oplus V_m:\, \alpha_k\in\bC,\,\sum_{k=1}^m\alpha_k=0\right\}.\ee 
\end{proposition}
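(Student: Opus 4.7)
The plan is to show that the quotient $Q := V_1 \oplus \ldots \oplus V_m / \cI$ satisfies the coproduct's universal property from Remark~\ref{r:univ}, and then to extract the claimed unital complete order isomorphism with $C := V_1 \oplus_1 \ldots \oplus_1 V_m$ by comparing universal arrows in both directions.

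First I would verify that $\cI$ is a null-subspace of the direct-sum operator system (equipped with the componentwise matrix cone and order unit $E = (e_1, \ldots, e_m)$). Self-adjointness of $\cI$ is immediate from $e_k^* = e_k$. If $(\alpha_1 e_1, \ldots, \alpha_m e_m) \in \cI$ is positive, then each $\alpha_k \geq 0$ because $\alpha_k e_k \in V_k^+$, and combined with $\sum_k \alpha_k = 0$ this forces all $\alpha_k$ to vanish. Hence, by the construction recalled in Section~\ref{s:quot}, the quotient $Q$ inherits the structure of an operator system with Archimedean order unit $E + \cI$.

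Next I would define maps $\tilde{\iota}_k : V_k \to Q$ by $\tilde{\iota}_k(v) = m \cdot (0, \ldots, v, \ldots, 0) + \cI$ with $v$ in the $k$-th slot. Complete positivity is inherited from the componentwise embedding composed with the cp quotient map, and unitality follows because $m(0, \ldots, e_k, \ldots, 0) - E = (-e_1, \ldots, (m-1) e_k, \ldots, -e_m)$ has coefficients summing to zero and therefore lies in $\cI$. To verify the universal property, given unital cp maps $\phi_k : V_k \to R$ into an operator system $R$, set $\Phi(v_1, \ldots, v_m) = \frac{1}{m} \sum_k \phi_k(v_k)$; this is cp (a scaled sum of cp maps), unital since $\Phi(E) = \frac{1}{m}\sum_k 1_R = 1_R$, and vanishes on $\cI$ since $\Phi(\alpha_1 e_1, \ldots, \alpha_m e_m) = \frac{1}{m}\sum_k \alpha_k = 0$. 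Hence $\Phi$ descends to a unital cp $\psi : Q \to R$ with $\psi \circ \tilde{\iota}_k = \phi_k$, and uniqueness follows because every class decomposes as $(v_1, \ldots, v_m) + \cI = \frac{1}{m}\sum_k \tilde{\iota}_k(v_k)$, so $\psi$ is determined on the spanning set $\bigcup_k \tilde{\iota}_k(V_k)$.

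To conclude I would apply the universal property of $C$ to the family $\tilde{\iota}_k : V_k \to Q$, producing a unique unital cp $\tilde{\psi} : C \to Q$ with $\tilde{\psi} \circ \iota_k = \tilde{\iota}_k$, and reciprocally apply the universal property of $Q$ just established to $\iota_k : V_k \to C$, producing a unique unital cp $\psi : Q \to C$ with $\psi \circ \tilde{\iota}_k = \iota_k$. The composite $\psi \circ \tilde{\psi}$ satisfies $(\psi \circ \tilde{\psi}) \circ \iota_k = \iota_k$ for each $k$, so uniqueness in the universal property of $C$ forces $\psi \circ \tilde{\psi} = \mathrm{id}_C$, and symmetrically $\tilde{\psi} \circ \psi = \mathrm{id}_Q$; thus $\psi$ is a unital cp bijection with cp inverse, i.e., a unital complete order isomorphism. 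I expect the subtlest point to be the normalization: the naive inclusions $v \mapsto (0, \ldots, v, \ldots, 0) + \cI$ fail to be unital because no single coordinate unit is congruent to $E$ modulo $\cI$, and the factor $m$ in $\tilde{\iota}_k$ (correspondingly $1/m$ in $\Phi$) is forced by the fact that the coproduct identifies $m$ separate units into a single unit of the underlying free product.
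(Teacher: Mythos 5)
Your argument is correct and is essentially the one the paper is pointing to: the paper offers no proof of Proposition~\ref{p:copr}, deferring instead to Kavruk's Proposition~4.7 and Fritz's Proposition~3.4, both of which proceed exactly as you do --- verify that $\cI$ is a null-subspace, check that the quotient satisfies the coproduct's universal property (with the same normalization by $m$ that for $m=2$ appears as the factor $2$ in Kavruk's isomorphism), and then play the two universal properties against each other. The only clause of Remark~\ref{r:univ} you leave unverified is that the maps $\tilde\iota_k$ are unital complete order \emph{embeddings} rather than merely unital completely positive, but this is harmless: your ping-pong step uses only unital complete positivity, and once $\psi$ is known to be a unital complete order isomorphism with $\psi\circ\tilde\iota_k=\iota_k$, the embedding property of $\tilde\iota_k$ is inherited from that of $\iota_k$.
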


\section{Generalized non-commuting cubes}
The aim of this section is to provide careful and detailed analysis of generalized noncommutative cubes. In \cite{FKPT} following ideas of Tsirelson the notion of \textit{noncommuting $m$-cube} was introduced. It is defined as an $(m+1)$-dimensional subspace of the universal $C^*$-algebra $C^*(h_1,\ldots,h_m)$ for $n$ noncommuting elements $h_1,\ldots,h_n$ such that $h_j^*=h_j$ and $\| h_j\|\leq 1$ for $j=1,\ldots, m$ (i.e. $h_j$ are selfadjoint contractions). The noncommuting $m$-cube $NC(m)$ is defined as the subspace
\be NC(m)=\mathrm{span}\{\jed,h_1,\ldots,h_m\}\subset C^*(h_1,\ldots,h_m)\ee
The notion of generalized non-commuting cubes appeared in \cite{chrom}.
The idea is closely related to the observation made by Tobias Fritz in \cite{F1}. He noticed that there is a possibility to relate various correlation boxes with some suitable tensor products of group $C^*$-algebras while studying non-locality. The details will be given in Section \ref{sec:bipartite_boxes}. 

Here, we will provide a more explicit construction of generalized noncommutative cubes than that of \cite{chrom}. 
We do this by combining $C^*$-algebraic construction of Fritz with  operator system approach.
We further use it to derive the relation between generalized noncommutative cubes and correlations boxes, generalizing in this way 
relation between non-commutative cubes and correlation boxes obtained in \cite{FKPT}. 
The purpose of this is to indicate some possible applications to study some approximation properties as well as violations of Bell and steering inequality.

Let us recall that if $G$ is a discrete group and $C^*(G)$ is its full $C^*$-algebra then $G$ embeds into $C^*(G)$.
Now, consider the concrete example of $G$. Let $\bZ_n$ be the cyclic with the generator $s$ of rank $n$. 
Its group $C^*$-algebra is isomorphic to $l_n^\infty$. The latter is nothing but the vector space of $n$-tuples $(z_0,z_1,\ldots,z_{n-1})$ of complex numbers with pointwise multiplication and usual involution. Let us describe the isomorphism more precisely. Let $\omega=\exp\left({2\pi i}/{n}\right)$. Consider elements $(p_a)_{a=0,1,\ldots,n-1}\in C^*(\bZ_n)$ given by the formula
\begin{equation}
\label{l:pa}
p_a=\dfrac{1}{n}\sum_{k=0}^{n-1}\omega^{ak}s^k.
\end{equation}
One can easily show that $p_a$ are orthogonal projections
\be 
\sum_{a=0}^{n-1}p_a=\jed
\ee 
and
\be 
s=\sum_{a=0}^{n-1}\omega^{-a}p_a.
\ee 
i.e. $p_a$ are spectral projections of $s$.
Let $e_0,e_1,\ldots,e_{n-1}$ be the standard basis in $l_n^\infty$. The linear map $C^*(\bZ_n)\to l_n^\infty$ given by $p_a\mapsto e_a$, $a=0,1,\ldots,n-1$ describes the promised isomorphism.

By $\bZ_n^{*m}$ we denote the free product of $m$ copies of $\bZ_n$. For $x=1,\ldots,m$ let $s_x$ be the generator of $x$-th copy of $\bZ_n$ in the free product $\bZ_n^{*m}$. They can be regarded as elements in the group $C^*$-algebra $C^*(\bZ_n^{*m})$. Define the following subspace of this algebra.
\begin{equation}
U_{m,n}=\mathrm{span}(\{\jed\}\cup\{s_x^k:\,x=1,\ldots,m,\;k=1,\ldots,n-1\}).
\end{equation}
As $U_{m,n}$ is a selfadjoint subspace of $C^*(\bZ_n^{*m})$ containing the unit it is an operator system. 

\begin{remark}\label{r:cub}
By \cite[Proposition 5.5]{FKPT} the operator system $NC(m)$ is completely order isomorphic to the system $U_{m,2}$. Thus $U_{m,n}$ can be considered as a generalization of noncommuting cube. 
\end{remark} 
\begin{remark}
\label{r:free}
Remind that for any discrete groups $G_1,\ldots,G_m$ the group $C^*$-algebra of the free product $G_1*\ldots G_m$ is nothing but the unital free products of group $C^*$-algebras, i.e.
\be C^*(G_1*\ldots*G_m)\cong C^*(G_1)*_1\ldots *_1 C^*(G_m).\ee 
When we apply it for $G_k=\bZ_n$, we get
\be C^*(\bZ_n^{*m})\cong C^*(\bZ_n)*_1\ldots *_1 C^*(\bZ_n).\ee 
Now, it follows from the construction of $U_{m,n}$ that it can be considered as a coproduct 
\be U_{m,n}=C^*(\bZ_n)\oplus_1\ldots\oplus_1 C^*(\bZ_n)\cong l_n^\infty\oplus_1\ldots\oplus_1l_n^\infty.\ee 
\end{remark}
As in (\ref{l:pa}) for every $x=1,\ldots,m$ and $a=0,1,\ldots,n-1$ we define projections
\begin{equation}
\label{l:pax}
p_{ax}=\dfrac{1}{n}\sum_{k=0}^{n-1}\omega^{ak}s_x^k.
\end{equation}
Hence $U_{m,n}$ is nothing but the span of all projections of the above form. 
\begin{proposition}
\label{p:zero}
Let $t=\sum_{a,x}z_{ax}p_{ax}\in U_{m,n}$ for some complex coefficients $z_{ax}$. Then $t=0$ if and only if there are complex complex numbers $u_x$, $x=1,\ldots,m$, such that
$\sum_{x=1}^{m}u_x=0$ and $z_{ax}=u_x$ for any pair of indices $a,x$.
\end{proposition}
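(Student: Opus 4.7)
My plan is to use the coproduct description of $U_{m,n}$ provided by Remark \ref{r:free} together with Proposition \ref{p:copr}, which together realise $U_{m,n}$ as a concrete quotient of the direct sum $(l_n^\infty)^{\oplus m}$. Once this identification is in place, the statement reads off essentially by inspection: the iff becomes a reformulation of what it means for a tuple to lie in the distinguished null-subspace $\mathcal{I}$.

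First I recall that the isomorphism $C^*(\bZ_n)\cong l_n^\infty$ from \eqref{l:pa} sends each projection $p_a$ to the $a$-th standard basis vector of $l_n^\infty$. By Remark \ref{r:free}, $U_{m,n}$ is the coproduct $l_n^\infty\oplus_1\cdots\oplus_1 l_n^\infty$, and by Proposition \ref{p:copr} this coproduct is unitally completely order isomorphic to $(l_n^\infty)^{\oplus m}/\mathcal{I}$ with
\[ \mathcal{I}=\Big\{(\alpha_1\mathbf{1},\ldots,\alpha_m\mathbf{1}):\ \alpha_x\in\bC,\ \sum_{x=1}^m\alpha_x=0\Big\}, \]
where $\mathbf{1}=(1,\ldots,1)$ denotes the unit of $l_n^\infty$. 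The embedding of the $x$-th factor into the coproduct sends the generator $s$ of $\bZ_n$ to $s_x$, and consequently $p_a$ to $p_{ax}$; thus under the above isomorphism each $p_{ax}$ corresponds to the class of the tuple having the $a$-th standard basis vector in the $x$-th slot and zeros elsewhere.

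With this dictionary an arbitrary element $t=\sum_{a,x}z_{ax}p_{ax}$ corresponds to the class of $(v_1,\ldots,v_m)$ with $v_x=(z_{0x},z_{1x},\ldots,z_{n-1,x})\in l_n^\infty$. The equation $t=0$ is then literally the condition $(v_1,\ldots,v_m)\in\mathcal{I}$, which by the definition of $\mathcal{I}$ says that $v_x=u_x\mathbf{1}$ for some $u_x\in\bC$ with $\sum_x u_x=0$. In coordinates this is exactly $z_{ax}=u_x$ for every $a$ and $x$, which proves both implications simultaneously.

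There is no significant obstacle in this route; the argument reduces to unpacking the coproduct description and the explicit form of $\mathcal{I}$. The only point requiring care is the identification of the coproduct embedding of the $x$-th copy of $l_n^\infty$ with the map $p_a\mapsto p_{ax}$, which is built into the construction of $U_{m,n}$ inside $C^*(\bZ_n^{*m})\cong C^*(\bZ_n)*_1\cdots *_1 C^*(\bZ_n)$. If one preferred a self-contained proof avoiding Proposition \ref{p:copr}, one could instead substitute \eqref{l:pax} to rewrite $t$ in the group-element basis $\{\jed\}\cup\{s_x^k:1\le k\le n-1,\ 1\le x\le m\}$, invoke linear independence of reduced words of length at most one in $C^*(\bZ_n^{*m})$, and then recover the same conclusion: the vanishing of the coefficient of $s_x^k$ for $k\ge 1$ forces $z_{ax}$ to be independent of $a$ via discrete Fourier inversion on $\bZ_n$, while the coefficient of $\jed$ yields the single scalar relation $\sum_x u_x=0$.
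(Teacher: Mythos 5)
Your argument is correct, but it takes a genuinely different route from the paper. The paper proves Proposition \ref{p:zero} by direct computation: it substitutes \eqref{l:pax}, expands $t$ in the linear basis $\{\jed\}\cup\{s_x^k:\,x=1,\ldots,m,\;k=1,\ldots,n-1\}$ (linearly independent because the $s_x$ are free generators), and then solves the resulting homogeneous system $\sum_{a}\omega^{ak}z_{ax}=0$, $k=1,\ldots,n-1$, by a Vandermonde determinant argument to conclude that the solution space for each fixed $x$ is one-dimensional and consists of the constant vectors $z_{ax}=u_x$; the coefficient of $\jed$ then yields $\sum_x u_x=0$. (Your closing remark sketches exactly this computation, with discrete Fourier inversion replacing the Vandermonde/Rouch\'e--Capelli step -- a slightly cleaner phrasing of the same idea.) Your main argument instead derives the statement structurally from Proposition \ref{p:copr} and Remark \ref{r:free}: the natural surjection $(l_n^\infty)^{\oplus m}\to U_{m,n}$, $(v_1,\ldots,v_m)\mapsto\sum_x\iota_x(v_x)$, has kernel $\mathcal{I}$, and unwinding the coordinates of $\mathcal{I}$ is literally the asserted condition. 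This is not circular, since Proposition \ref{p:copr} is established independently (following Kavruk and Fritz) and appears before Proposition \ref{p:zero} in the paper; indeed the paper itself later invokes Proposition \ref{p:copr} to identify $U_{m,n}$ with $l_{mn}^\infty/J$, so your proof simply reorders the logical dependencies. What each approach buys: the paper's computation is self-contained and doubles as a concrete verification of the coproduct picture (it is effectively what one must check to prove Proposition \ref{p:copr} in this special case), whereas yours is shorter and makes transparent \emph{why} the relation $\sum_x u_x=0$ appears -- it is the single linear relation $\jed=\iota_x(\mathbf{1})$ shared by all $m$ summands. The one point you rightly flag -- that the isomorphism of Proposition \ref{p:copr} is the natural one sending the class of $(v_1,\ldots,v_m)$ to $\sum_x\iota_x(v_x)$ -- deserves the sentence you give it; alternatively one can sidestep it entirely by noting that the natural map is surjective, kills $\mathcal{I}$, and that $\dim\bigl((l_n^\infty)^{\oplus m}/\mathcal{I}\bigr)=mn-m+1=\dim U_{m,n}$, so its kernel is exactly $\mathcal{I}$.
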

\begin{proof}
Using (\ref{l:pax}) we obtain
\begin{eqnarray}
t & = &
\sum_{a=0}^{n-1}\sum_{x=1}^{m}z_{ax}\cdot\dfrac{1}{n}\sum_{k=0}^{n-1}\omega^{ak}s_x^k 
=
\dfrac{1}{n}\sum_{a=0}^{n-1}\sum_{x=1}^{m}z_{ax}\left(\jed+ \sum_{k=1}^{n-1}\omega^{ak}s_x^k \right)\nonumber \\
&=&
\dfrac{1}{n}\left[\left(\sum_{a=0}^{n-1}\sum_{x=1}^mz_{ax}\right)\jed+ \sum_{x=1}^m\sum_{k=1}^{n-1}\left(\sum_{a=0}^{n-1}\omega^{ak}z_{ax}\right) s_x^k\right]
\end{eqnarray}
Since $s_x$, $x=1,\ldots,m$, are free generators of free product $\bZ_n^{*m}$ the system $\{\jed\}\cup\{s_x^k:\,x=1,\ldots,m,\;k=1,\ldots,n-1\}$ is a linear basis in $U_{m,n}$. Thus $t=0$ implies
\begin{equation}
\label{e:ur1}
\sum_{a=0}^{n-1}\sum_{x=1}^mz_{ax}=0
\end{equation}
and
\begin{equation}
\label{e:ur2}
\sum_{a=0}^{n-1}\omega^{ak}z_{ax}=0
\end{equation}
for every $x=1,\ldots,m$ and $k=1,\ldots,n-1$. Let us fix some $x$ and loot at (\ref{e:ur2}) as a homogeneous system of $n-1$ linear equations (indexed by $k=1,\ldots,n-1$) with $n$ variables $z_{ax}$ (indexed by $a=0,1,\ldots,n-1$). Firstly, observe that if $z_{ax}$, $a=0,1,\ldots,n-1$ are equal to each other, say $z_{ax}=u_x$ for $a=0,1,\ldots,n-1$ where $u_x$ is some complex number, then they satisfy the equations (\ref{e:ur2}). Secondly, the coefficient matrix of the system has the form
\be \left[\begin{array}{cccccc}
1   & \omega   & \omega^2 & \cdots & \omega^{n-2} & \omega^{n-1} \\
1   & \omega^2 & \omega^4 & \cdots & \omega^{2n-4} & \omega^{2n-2} \\
1   & \omega^3 & \omega^6 & \cdots & \omega^{3n-6} & \omega^{3n-3} \\
\vdots & \vdots & \vdots & & \vdots & \vdots \\
1   & \omega^{n-1} & \omega^{2n-2} & \cdots & \omega^{(n-1)(n-2)} & \omega^{(n-1)^2}
\end{array}
\right]\ee 
Notice that the matrix obtained by removing the last column is a $(n-1)\times(n-1)$ dimensional Vandermonde matrix and its determinant is equal to \be \prod_{1\leq k<l\leq n-1}(\omega^k-\omega^l).\ee  
Since the last number is non-zero we conclude from Rouch{\'e}-Capelli theorem that for the fixed $x$ the set of solutions of (\ref{e:ur2}) is a 1-dimensional subspace of $l_n^\infty$. Thus, the described above solutions $z_{ax}=u_x$, $a=0,1,\ldots,n-1$, are the only solutions of the system (\ref{e:ur2}) for the fixed $x$.

Now, assume that $z_{ax}$ are such that $z_{ax}=u_x$, where $u_x$, $x=1,\ldots, m$ are some complex numbers. It follows from (\ref{e:ur1}) that 
$\sum_{x=1}^mu_x=0$ and the proof is finished.
\end{proof}

A universality property of $U_{m,n}$ for quantum measurements is a consequence of the fact mentioned in Remark \ref{r:free}.
\begin{proposition}
\label{p:univ}
Let $H$ be a Hilbert space. Assume that $(E_x^a)_{x=1,\ldots,m}^{a=0,\ldots,n-1}$ is a fa\-mi\-ly of positive operators acting on $H$ such that $\sum_{a=0}^{n-1}E_x^a=\jed$ for any $x=1,\ldots,m$. Then there is a unique unital completely positive map $\varphi:U_{m,n}\to B(H)$ such that 
$\varphi(p_{ax})=E_x^a$ for every pair $x,a$.
\end{proposition}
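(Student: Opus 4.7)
The plan is to reduce the statement to the universal property of the coproduct of operator systems (Remark \ref{r:univ}), exploiting the identification $U_{m,n}\cong l_n^\infty\oplus_1\ldots\oplus_1l_n^\infty$ given in Remark \ref{r:free}. Under the isomorphism $C^*(\bZ_n)\cong l_n^\infty$ the projection $p_a$ corresponds to the standard basis vector $e_a$, and for each $x=1,\ldots,m$ the canonical embedding $\iota_x:l_n^\infty\to U_{m,n}$ sends $e_a$ to $p_{ax}$. Thus, to produce $\varphi$, it suffices to produce unital completely positive maps $\phi_x:l_n^\infty\to B(H)$ with $\phi_x(e_a)=E_x^a$ and to glue them by the universal property.

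For each fixed $x$ I would define $\phi_x:l_n^\infty\to B(H)$ by linear extension of $e_a\mapsto E_x^a$, $a=0,1,\ldots,n-1$. Since $\{e_a\}$ is a linear basis of $l_n^\infty$ this is well-defined, and unitality is immediate from the hypothesis $\sum_{a=0}^{n-1}E_x^a=\jed$. Positivity is equally straightforward: the positive cone of $l_n^\infty$ consists of tuples $\sum_a\lambda_ae_a$ with $\lambda_a\geq 0$, and their images $\sum_a\lambda_aE_x^a$ are nonnegative combinations of positive operators. Finally, since $l_n^\infty$ is a commutative $C^*$-algebra, Stinespring's theorem guarantees that positivity automatically upgrades to complete positivity for maps out of it. So each $\phi_x$ is a unital completely positive map.

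Now the coproduct universal property recalled in Remark \ref{r:univ} produces a unique unital completely positive map $\varphi:U_{m,n}\to B(H)$ with $\varphi\circ\iota_x=\phi_x$ for every $x$. Unraveling the identifications gives $\varphi(p_{ax})=\varphi(\iota_x(e_a))=\phi_x(e_a)=E_x^a$, which establishes existence. Uniqueness is already part of the universal property (alternatively, it follows at once from the fact that $U_{m,n}$ is the linear span of $\{\jed\}\cup\{p_{ax}\}$, with $\jed=\sum_{a}p_{ax}$ for any fixed $x$, so $\varphi$ is determined on a spanning set).

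There is no substantial obstacle here; the argument is essentially a bookkeeping exercise once Remarks \ref{r:free} and \ref{r:univ} are in place. The only point requiring a moment of care is the passage from positivity of $\phi_x$ to complete positivity, which is handled by invoking the classical commutative-domain result rather than by a direct matrix computation.
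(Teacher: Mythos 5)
Your argument is correct and is essentially identical to the paper's own proof: both define, for each fixed $x$, a unital positive map on the commutative algebra $C^*(\bZ_n)\cong l_n^\infty$ sending the spectral projections to the $E_x^a$, upgrade it to complete positivity via commutativity of the domain, and then glue the maps using the universal property of the coproduct from Remark \ref{r:univ}. No discrepancies worth noting.
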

\begin{proof}
Given $x=1,\ldots,m$ let $\varphi_x:C^*(\bZ_n)\to B(H)$ be a linear map such that $\phi(p_a)=E_x^a$ for $a=0,1,\ldots,n-1$. Obviously it is positive and unital. Complete positivity of $\varphi_x$ follows from the fact that $C^*(\bZ_n)$ is a commutative $C^*$-algebra. Now, from universality property of the coproduct (see Remark \ref{r:univ}) it follows that there is the unique unital completely positive map $\varphi:U_{m,n}\to B(H)$ which extends each $\varphi_x$.
\end{proof}
Our next goal is to describe the operator system dual $U_{m,n}^\mathrm{d}$.

Elements of $l_{mn}^\infty$ will be described in the matrix-style as sequences with double indices: $(z_{ax})$, $a=0,1\ldots,n-1$, $x=1,\ldots,m$. Assume that a linear subspace $J\subset l_{mn}^\infty$ is composed of elements $(z_{ax})$ satisfying the condition described in the preceeding proposition, i.e.
\be 
(z_{ax})\in J \;\;\Leftrightarrow \;\;
\exists\, u_1,\ldots,u_m\in\bC:\,\sum_{x=1}^mu_x=0\;\;\mbox{and}\;\; \forall\, a,x: z_{ax}=u_x
\ee 
Observe that $J$ is a null-subspace of the operator system $l_{mn}^\infty$. Then $l_{mn}^\infty/J$ has the natural structure of operator system (see Section \ref{s:quot}).

\begin{proposition}
The operator system $U_{m,n}$ is unitally completely isomorphic to $l_{mn}^\infty/J$.
\end{proposition}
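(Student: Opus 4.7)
The proof is a direct combination of two earlier results in the paper. First, Remark \ref{r:free} already identifies $U_{m,n}$ with the $m$-fold operator system coproduct $l_n^\infty\oplus_1\ldots\oplus_1 l_n^\infty$, the canonical embeddings $\iota_x\colon l_n^\infty\to U_{m,n}$ being given on the standard basis by $e_a\mapsto p_{ax}$. Second, Proposition \ref{p:copr} describes any such coproduct as a quotient of the operator system direct sum by an explicit null subspace. So my plan is simply to stack these two statements and verify that the resulting null subspace matches $J$.

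Concretely, I would apply Proposition \ref{p:copr} with $V_k=l_n^\infty$ and order unit $e_k=(1,1,\ldots,1)$ in the $k$-th copy, yielding
$$
U_{m,n}\;\cong\;l_n^\infty\oplus_1\ldots\oplus_1 l_n^\infty \;\cong\; (l_n^\infty\oplus\ldots\oplus l_n^\infty)/\cI,
$$
where $\cI=\{(\alpha_1 e_1,\ldots,\alpha_m e_m):\sum_k\alpha_k=0\}$, and then identify the $m$-fold operator system direct sum on the right with $l_{mn}^\infty$ in the evident way (the $x$-th block of coordinates being the $x$-th summand; the order unit of the direct sum becoming the order unit of $l_{mn}^\infty$). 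Under this identification, a tuple $(\alpha_1 e_1,\ldots,\alpha_m e_m)$ becomes the sequence $(z_{ax})\in l_{mn}^\infty$ with $z_{ax}=\alpha_x$ (constant in $a$), and the constraint $\sum_k\alpha_k=0$ is precisely the condition defining $J$. Hence $\cI$ corresponds to $J$, and composing the two isomorphisms delivers the asserted unital complete order isomorphism $U_{m,n}\cong l_{mn}^\infty/J$.

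There is no real obstacle: the substantive operator-system content sits in Proposition \ref{p:copr}, and the remaining work is purely bookkeeping. An alternative, more hands-on route would be to use Proposition \ref{p:zero} to observe that the linear map $l_{mn}^\infty\to U_{m,n}$ sending $e_{ax}\mapsto p_{ax}$ has kernel exactly $J$, and then to leverage Proposition \ref{p:univ} together with the universal property of the operator system quotient (taking care of the factor of $m$ coming from the unit normalization $\sum_a p_{ax}=\un$ in each copy) to promote the resulting linear bijection to a unital complete order isomorphism. This, however, essentially reproves Proposition \ref{p:copr} in the present special case, so the citation route above is cleaner.
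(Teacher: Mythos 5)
Your proposal matches the paper's proof exactly: the paper also derives this as an immediate consequence of Proposition \ref{p:copr} combined with the identification of $U_{m,n}$ as the coproduct $l_n^\infty\oplus_1\ldots\oplus_1 l_n^\infty$ from Remark \ref{r:free}. You simply spell out the bookkeeping (identifying the direct sum with $l_{mn}^\infty$ and matching $\cI$ with $J$) that the paper leaves implicit.
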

\begin{proof}
It is an immediate consequence of Proposition \ref{p:copr} (c.f. Remark \ref{r:free}).
\end{proof}
\begin{corollary}
\label{c:dod}
An element $t=\sum_{a,x}z_{ax}p_{ax}$ is positive if and only if there is a sequence $(w_{ax})\in J$ such that $z_{ax}+w_{ax}\geq 0$ for any pair $a,x$.
\end{corollary}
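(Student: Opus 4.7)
The plan is to transport positivity across the isomorphism $U_{m,n}\simeq l_{mn}^\infty/J$ proved just above. First I would identify where the generating projections $p_{ax}$ land under this identification. Tracing through Remark~\ref{r:free} and Proposition~\ref{p:copr}, $U_{m,n}$ is the coproduct $l_n^\infty\oplus_1\ldots\oplus_1 l_n^\infty$, realized concretely as $l_{mn}^\infty/J$, and $p_{ax}$ is the image of the $a$-th standard basis projection sitting inside the $x$-th summand. Thus $p_{ax}$ corresponds to the coset $e_{ax}+J$, where $e_{ax}\in l_{mn}^\infty$ is the standard basis vector with a single $1$ at position $(a,x)$. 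Consequently, $t=\sum_{a,x}z_{ax}p_{ax}$ corresponds to the coset $(z_{ax})+J$.

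Once this correspondence is in place, I would invoke the definition of the quotient operator system structure recalled in Section~\ref{s:quot}: the coset $(z_{ax})+J$ lies in the positive cone of $l_{mn}^\infty/J$ precisely when some representative is positive in $l_{mn}^\infty$, i.e.\ when there exists $(w_{ax})\in J$ such that $(z_{ax}+w_{ax})$ is positive in $l_{mn}^\infty$. Because the latter is a commutative $C^*$-algebra, positivity is just entrywise non-negativity, giving the condition $z_{ax}+w_{ax}\geq 0$ for every pair $a,x$.

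No real obstacle arises here; the corollary is essentially a repackaging of the preceding proposition together with the definition of quotient positivity. The only minor point worth sanity-checking is that the image-of-positive-cone description of the cone in $l_{mn}^\infty/J$ is genuinely the one that gets transported to $U_{m,n}$, which is automatic since $J$ is a null-subspace and Kavruk's construction cited in Section~\ref{s:quot} yields a bona fide operator system quotient with exactly this cone.
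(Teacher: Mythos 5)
Your argument is correct and is exactly the route the paper intends: the corollary is stated without proof as an immediate consequence of the identification $U_{m,n}\simeq l_{mn}^\infty/J$ together with the quotient cone description from Section~\ref{s:quot}, which is precisely what you spell out. The only cosmetic point is that under the map $\gamma$ used in Proposition~\ref{p:Ud} the projection $p_{ax}$ corresponds to $e_{ax}+J$ only up to a fixed positive scalar, which is harmless for the positivity statement since $J$ is a subspace and cones are scale-invariant.
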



Define 
\begin{equation}
\label{e:Vmn}
V_{mn}=\left\{(z_{ax})_{a,x}\in l_{mn}^\infty:\, \sum_{a=0}^{n-1}z_{ax}\;\mbox{does not depend on $x$}\right\}
\end{equation}
The matrix order structure on $V_{m,n}$ is inherited from the described above order structure on $l_{mn}^\infty$. Namely
\begin{equation}
\label{e:Vmn2}
M_k(V_{m,n})^+=\{(A_{ax})_{a,x}:\,\mbox{$A_{ax}\geq 0$ and $\sum_{a}A_{ax}$ does not depend on $x$}\}
\end{equation}

Now we are ready to formulate a generalization of \cite[Proposition 5.11]{FKPT}
\begin{proposition}
\label{p:Ud}
The operator system dual $U_{m,n}^\mathrm{d}$ is completely order isomorphic to $V_{m,n}$.
\end{proposition}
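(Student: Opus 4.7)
The plan is to exhibit an explicit map $\Phi: V_{m,n} \to U_{m,n}^\mathrm{d}$, check it is a linear bijection, and then upgrade this to a complete order isomorphism by exploiting the quotient description $U_{m,n}\cong l_{mn}^\infty/J$ established in the preceding proposition.

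First I would define $\Phi$ by $\Phi((z_{ax}))(p_{bx}):=z_{bx}$, extended linearly from the spanning set $\{p_{bx}\}$. Well-definedness is a direct application of Proposition \ref{p:zero}: if $\sum_{a,x}c_{ax}p_{ax}=0$ then $c_{ax}=u_x$ for some scalars with $\sum_x u_x=0$, and then $\sum c_{ax}z_{ax}=\sum_x u_x\bigl(\sum_a z_{ax}\bigr)=C\sum_x u_x=0$, using that $\sum_a z_{ax}$ equals a constant $C$ independent of $x$ precisely by the definition of $V_{m,n}$. Injectivity is immediate because $z_{bx}$ is recovered as $\Phi((z_{ax}))(p_{bx})$, and surjectivity follows from matching dimensions: $\dim V_{m,n}=mn-(m-1)=m(n-1)+1=\dim U_{m,n}^\mathrm{d}$.

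Next, to prove $\Phi$ is a complete order isomorphism, I would fix $k$ and unpack both sides. By definition, $M_k(U_{m,n}^\mathrm{d})^+$ consists of those $(\phi_{ij})$ whose associated map $\Psi: U_{m,n}\to M_k(\bC)$, $v\mapsto(\phi_{ij}(v))$, is completely positive. A matrix $(A_{ax})\in M_k(V_{m,n})$ is sent by $\Phi^{(k)}$ to the map $\Psi$ determined by $\Psi(p_{bx})=A_{bx}$; thus the task is to show that this $\Psi$ is CP iff each $A_{ax}\ge 0$ and $\sum_a A_{ax}$ is independent of $x$. Using the quotient description from the preceding proposition, CP maps $\Psi: U_{m,n}\to M_k(\bC)$ correspond bijectively to CP maps $\tilde\Psi: l_{mn}^\infty\to M_k(\bC)$ that annihilate $J$, via composition with the quotient $\pi$ sending $\delta_{ax}\mapsto p_{ax}$. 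Since $l_{mn}^\infty$ is abelian, $\tilde\Psi$ is CP iff each $\tilde\Psi(\delta_{ax})=A_{ax}$ is positive, and a direct computation with the elements of $J$ shows that annihilation of $J$ amounts exactly to $\sum_a A_{ax}$ being constant in $x$. These are precisely the conditions defining $M_k(V_{m,n})^+$ in (\ref{e:Vmn2}).

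The only real technical input is the correspondence between CP maps out of $U_{m,n}$ and CP maps out of $l_{mn}^\infty$ that kill $J$; this is a standard feature of the operator system quotient construction recalled in Section \ref{s:quot}. Once it is in place the rest of the argument reduces to Proposition \ref{p:zero}, the commutativity of $l_{mn}^\infty$ (which collapses complete positivity to positivity), and matching the defining formulas for $V_{m,n}$ on both scalar and matrix levels.
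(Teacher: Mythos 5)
Your proof is correct, and it rests on the same structural fact as the paper's: the presentation $U_{m,n}\cong l_{mn}^\infty/J$ from the preceding proposition. The difference is in how the dualization is carried out. The paper observes that $\gamma:l_{mn}^\infty\to U_{m,n}$, $\gamma((z_{ax}))=\frac1n\sum_{a,x}z_{ax}p_{ax}$, is a complete quotient map and then invokes the general duality theorem of Farenick--Paulsen (\cite[Proposition 1.8]{FP}), which says that the adjoint of a complete quotient map is a complete order inclusion; identifying the image of $\gamma^{\mathrm d}$ as $V_{m,n}$ finishes the argument in two lines. You instead prove the relevant instance of that duality by hand: you build the inverse map explicitly, reduce complete positivity of a functional matrix on $U_{m,n}$ to complete positivity of its lift to $l_{mn}^\infty$ via the universal property of operator system quotients, and then use abelianness of $l_{mn}^\infty$ to collapse complete positivity to entrywise positivity of the $A_{ax}$, with annihilation of $J$ accounting for the constancy of $\sum_a A_{ax}$. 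Your route is more self-contained and makes the matrix-level cone identification (\ref{e:Vmn2}) completely transparent, at the cost of redoing a special case of a known general theorem; the paper's route is shorter but leans on the external citation. Two minor remarks: surjectivity of $\Phi$ can be seen even more directly than by dimension count, since any $f\in U_{m,n}^{\mathrm d}$ yields $(f(p_{ax}))\in V_{m,n}$ because $\sum_a f(p_{ax})=f(\jed)$ is independent of $x$; and the normalization factor $\frac1n$ appearing in the paper's $\gamma$ is immaterial for the order isomorphism, so your omission of it is harmless.
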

\begin{proof}
The proof is the same as in \cite{FKPT}. Let $\gamma:l_{mn}^\infty\to U_{m,n}$ be defined by
\be \gamma((z_{ax}))=\frac{1}{n}\sum_{a,x}z_{ax}p_{ax}.\ee 
It follows from Remark \ref{r:free} and Proposition \ref{p:copr} that $\gamma$ is a complete quotient map. Thus according to \cite[Proposition 1.8]{FP} the adjoint map $\gamma^\mathrm{d}:U_{m,n}^\mathrm{d}\to (l_{mn}^\infty)^\mathrm{d}=l_{mn}^\infty$ is a complete order inclusion.
The map $\gamma^\mathrm{d}$ is given by
$\gamma^d(f)=(f(p_{ax}))_{a,x}$ for $f\in U_{m,n}^\mathrm{d}$
and its image is equal to $V_{m,n}$. This completes the proof.
\end{proof}

\section{Tensor products of $U_{m,n}$}
Let us remind that for any operator systems $V$ and $W$ and for any tensor product $\tau$ we have the following identities
\be M_k(V\otimes W)=M_k(V)\otimes W\ee 
and
\be M_k(V\otimes_\tau W)^+=(M_k(V)\otimes_\tau W)^+.\ee 
Moreover, we have
\begin{lemma}
For any operator system $V$ we have
\be M_k(V)^+=(M_k(\bC)\otimes_\mathrm{min} 
V)^+.
\ee 
\end{lemma}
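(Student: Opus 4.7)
The plan is to show both cones coincide by realizing them as the positive elements of a common concrete $C^*$-algebra, using the Choi--Effros representation together with the concrete description of the minimal tensor product. First, I would fix a unital complete order embedding $\iota : V \hookrightarrow \fB(\cH)$ provided by Choi--Effros. By the very definition of the matrix ordering on the operator system $V$, this embedding is completely order preserving, so $M_k(V)^+$ coincides with the set of matrices $(v_{ij}) \in M_k(V)$ that are positive as operators in $M_k(\fB(\cH)) \cong \fB(\cH^k)$.

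Next I would invoke the concrete realization of $\otimes_\mathrm{min}$: for concrete operator systems $S \subset \fB(\cH_1)$ and $T \subset \fB(\cH_2)$ (via Choi--Effros), the cone $(S \otimes_\mathrm{min} T)^+$ is exactly the set of elements of $S \otimes T$ that are positive when viewed inside $\fB(\cH_1 \otimes \cH_2)$, and similarly at every matrix level. Applying this to $M_k(\bC)$ acting on $\bC^k$ and to $V \subset \fB(\cH)$ yields
$$(M_k(\bC) \otimes_\mathrm{min} V)^+ = \bigl\{\, u \in M_k(\bC) \otimes V : u \geq 0 \text{ in } \fB(\bC^k \otimes \cH)\,\bigr\}.$$

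Finally, I would use the canonical $^*$-isomorphism
$$\fB(\bC^k) \otimes \fB(\cH) \;\cong\; \fB(\bC^k \otimes \cH) \;\cong\; M_k(\fB(\cH)),\qquad E_{ij} \otimes T \;\longmapsto\; T\cdot E_{ij},$$
under which the linear subspace $M_k(\bC) \otimes V$ is carried precisely onto $M_k(V)$ and positivity in the left-hand side coincides with positivity in $M_k(\fB(\cH))$. Combining this with the two displayed characterizations, both cones are identified with the same subset of $M_k(\fB(\cH))$, namely those matrices with entries in $V$ that are positive in $M_k(\fB(\cH))$, which proves the equality.

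I do not foresee a genuine obstacle: the argument is essentially bookkeeping, and the only point requiring care is to check that the natural identification $M_k(\bC) \otimes V \cong M_k(V)$ used on the tensor-product side agrees with the identification $M_k(\bC) \otimes \fB(\cH) \cong M_k(\fB(\cH))$ restricted to the subspace, so that the two positivity criteria are being compared inside the same ambient $C^*$-algebra. Once this is noted, the result follows directly from the abstract-to-concrete description of operator systems and of the minimal tensor product.
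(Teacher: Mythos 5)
Your proof is correct and follows essentially the same route as the paper's: both pass to a concrete realization $V\subset\fB(\cH)$ via Choi--Effros, use the fact that the minimal tensor product of concrete operator systems is induced by the spatial embedding, and conclude via the identification $M_k(\bC)\otimes\fB(\cH)\cong M_k(\fB(\cH))$. You simply spell out the bookkeeping that the paper's two-line proof leaves implicit.
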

\begin{proof}
We may assume that $V\subset B(H)$ is a concrete operator system.
Then the embedding $M_k(V)\subset M_k(B(H))$ defines the cone $M_k(V)^+$ of positive elements in $M_k(V)$. The statement of the proposition follows from the identification
$M_k(B(H))=M_k(\bC)\otimes B(H)$.
\end{proof}

Now, let us fix an operator system $V$. Our aim is to characterize positive elements in $M_k(U_{m,n}\otimes_\tau U_{m,n})$ for various tensor products $\tau$.  

Firstly, that any $t\in M_k(V\otimes U_{m,n})$ can written in the form
$t=\sum_{a,x}v_{ax}\otimes p_{ax}$
for some matrices $v_{ax}\in M_k(V)$. The matrices are not unique.
\begin{proposition}
\label{p:opi}
Let $v_{ax}\in M_k(V)$. Then
$\sum_{a,x}v_{ax}\otimes p_{ax}=0$ if and only if there are elements $v_x^0\in M_k(V)$ such that $v_{ax}=v_x^0$ for any $x$ and $\sum_xv_x^0=0$.
\end{proposition}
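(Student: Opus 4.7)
My plan is to reduce the matrix-coefficient statement to the scalar-coefficient version already established in Proposition \ref{p:zero}, by slicing with arbitrary linear functionals on $M_k(V)$.

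The ``if'' direction is immediate: since $\sum_{a=0}^{n-1}p_{ax}=\jed$ for each $x$, if $v_{ax}=v_x^0$ is independent of $a$, then
\[
\sum_{a,x}v_{ax}\otimes p_{ax}=\sum_{x}v_x^0\otimes\sum_{a}p_{ax}=\Bigl(\sum_{x}v_x^0\Bigr)\otimes\jed,
\]
which vanishes exactly when $\sum_x v_x^0=0$.

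For the ``only if'' direction, I would argue as follows. Fix an arbitrary linear functional $\phi\in M_k(V)^*$. The map $\phi\otimes\id_{U_{m,n}}\colon M_k(V)\otimes U_{m,n}\to U_{m,n}$ is well-defined on the algebraic tensor product, so the assumption $\sum_{a,x}v_{ax}\otimes p_{ax}=0$ yields
\[
\sum_{a,x}\phi(v_{ax})\,p_{ax}=0\quad\text{in }U_{m,n}.
\]
By Proposition \ref{p:zero} applied to the scalars $z_{ax}=\phi(v_{ax})$, there exist complex numbers $u_x(\phi)$ with $\sum_{x}u_x(\phi)=0$ and $\phi(v_{ax})=u_x(\phi)$ for every $a$ and $x$. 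In particular, for every $x$ and every $a,a'\in\{0,\ldots,n-1\}$ we have $\phi(v_{ax}-v_{a'x})=0$. Since $\phi\in M_k(V)^*$ was arbitrary and the dual of any vector space separates points, this forces $v_{ax}=v_{a'x}$ for all $a,a'$ and all $x$. Setting $v_x^0:=v_{0x}$ gives $v_{ax}=v_x^0$ independently of $a$.

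Substituting back into the original equation, the calculation from the ``if'' direction shows
\[
0=\sum_{a,x}v_{ax}\otimes p_{ax}=\Bigl(\sum_{x}v_x^0\Bigr)\otimes\jed,
\]
and since $\jed\neq 0$ in $U_{m,n}$ and the tensor product of a nonzero element with $\jed$ is nonzero, we conclude $\sum_x v_x^0=0$. The main (and essentially only) subtlety is justifying the slicing by $\phi\otimes\id$ on the algebraic tensor product and the separating property of linear functionals, both of which are routine; all the real content is absorbed into the scalar Proposition \ref{p:zero}.
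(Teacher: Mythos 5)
Your proof is correct. The paper's own proof of this proposition is simply the instruction to repeat the argument of Proposition \ref{p:zero} with coefficients in $M_k(V)$ instead of $\bC$: one expands the $p_{ax}$ in the linearly independent system $\{\jed\}\cup\{s_x^k\}$, now with $M_k(V)$-valued coefficients, and solves the same Vandermonde system. You instead reduce the matrix case to the already-established scalar case by slicing with arbitrary functionals $\phi\in M_k(V)^*$ and invoking the fact that the algebraic dual separates points. Both routes rest on the same underlying content (the linear relations among the $p_{ax}$ inside $C^*(\bZ_n^{*m})$), but your reduction is a clean formal argument that avoids rerunning the computation and makes transparent that the kernel of the map $(v_{ax})\mapsto\sum v_{ax}\otimes p_{ax}$ is just $J\otimes M_k(V)$ for the null-subspace $J$ of the paper; the paper's version keeps everything inside $U_{m,n}$ and so does not need to discuss functionals on $M_k(V)$ at all. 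One microscopic simplification available to you: rather than substituting back and using that $w\otimes\jed=0$ forces $w=0$, you could note directly that $\phi\bigl(\sum_x v_x^0\bigr)=\sum_x u_x(\phi)=0$ for every $\phi$, which gives $\sum_x v_x^0=0$ by the same separation argument you already used.
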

\begin{proof}
Similar to Proposition \ref{p:zero}.
\end{proof}
\begin{lemma}
\label{l:vax}
Let $k\in\bN$ and let $t= \sum_{a,x}v_{ax}\otimes p_{ax} $. If
$t\in M_k(V\otimes_\mathrm{min}U_{m,n})^+$ then 
$\sum_{a,x}z_{ax}v_{ax}\in M_k(V)^+$ for any $(z_{ax})\in V_{m,n}^+$. 
\end{lemma}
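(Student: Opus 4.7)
My plan is to dualize and slice: reinterpret $(z_{ax}) \in V_{m,n}^+$ as a positive linear functional on $U_{m,n}$ and apply it to the second tensor factor of $t$. Concretely, I would define a linear map $f: U_{m,n} \to \bC$ by $f(p_{ax}) = z_{ax}$. Well-definedness requires that $\sum_{a,x} e_{ax} p_{ax} = 0$ in $U_{m,n}$ implies $\sum_{a,x} e_{ax} z_{ax} = 0$; by Proposition \ref{p:zero} the only nontrivial obstructions are $e_{ax} = u_x$ with $\sum_x u_x = 0$, for which $\sum_{a,x} u_x z_{ax} = \sum_x u_x \left(\sum_a z_{ax}\right) = 0$ follows immediately from the defining constraint (\ref{e:Vmn}) that $\sum_a z_{ax}$ is independent of $x$. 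Positivity of $f$ on $U_{m,n}$ is then a direct consequence of Proposition \ref{p:Ud}, which identifies $V_{m,n}^+$ (up to the normalization built into $\gamma$) with the positive cone of the operator system dual $U_{m,n}^\mathrm{d}$.

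Since $f$ takes values in $\bC$, positivity upgrades automatically to complete positivity: for $(s_{ij}) \in M_l(U_{m,n})^+$ and any $\xi \in \bC^l$, the matrix ordering axiom gives $\sum_{i,j} \bar{\xi}_i s_{ij} \xi_j \in U_{m,n}^+$, whence $\sum_{i,j} \bar{\xi}_i f(s_{ij}) \xi_j = f\!\left(\sum_{i,j}\bar{\xi}_i s_{ij}\xi_j\right) \geq 0$, so $(f(s_{ij})) \in M_l(\bC)^+$. I would then invoke the functoriality of the minimal tensor product under completely positive maps to conclude that the slice map
\be
\id_V \otimes f: V \otimes_{\mathrm{min}} U_{m,n} \to V \otimes_{\mathrm{min}} \bC = V
\ee
is completely positive. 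This is a direct consequence of the defining property (3) of an operator system tensor product together with the characterization of positivity in $\otimes_{\mathrm{min}}$ via UCP maps into matrix algebras; non-unitality of $f$ is harmless, since one may either rescale to the state $f/f(\jed)$ when $f(\jed) > 0$ or dispose of the trivial case $f = 0$.

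Putting the pieces together, using the identification $M_k(V \otimes_\mathrm{min} U_{m,n})^+ = (M_k(V) \otimes_\mathrm{min} U_{m,n})^+$ recalled at the beginning of this section, I apply $\id \otimes f$ at matrix level $k$ to $t$ and obtain
\be
(\id_{M_k(V)} \otimes f)(t) = \sum_{a,x} v_{ax}\, f(p_{ax}) = \sum_{a,x} z_{ax} v_{ax},
\ee
which lies in $M_k(V)^+$ as the image of a positive element under a completely positive map. The one step that I expect to require the most care is the functoriality of $\id \otimes f$ on $\otimes_{\mathrm{min}}$: one must confirm that slicing by a positive (generally non-unital) linear functional preserves positivity in the minimal tensor product, which ultimately reduces to the definition of $\otimes_{\mathrm{min}}$ applied to the rescaled state. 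All remaining steps are essentially bookkeeping given Propositions \ref{p:zero} and \ref{p:Ud}.
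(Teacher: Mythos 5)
Your proof is correct and follows essentially the same route as the paper: both arguments reinterpret $(z_{ax})\in V_{m,n}^+$ as a positive functional on $U_{m,n}$ (via Propositions \ref{p:zero} and \ref{p:Ud}) and slice the second tensor factor, using the characterization of positivity in $\otimes_{\mathrm{min}}$ by (completely) positive maps into matrix algebras. Your version is somewhat more careful than the paper's about well-definedness of the functional, the automatic complete positivity of scalar functionals, the non-unitality issue, and the passage to matrix level $k$, but the underlying idea is identical.
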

\begin{proof}
It follows from the definition of the $\mathrm{min}$ tensor product that 
$t$ is positive if and only if $\phi\otimes\psi(t)$ is positive for any unital completely positive maps
$\phi:V\to M_q(\bC)$ and $\psi:U_{m,n}\to M_r(\bC)$. Let $r=1$ and $\psi\in V_{m,n}^+$ be a functional defined as $\psi(p_{ax})=z_{ax}$ for any $a,x$, where $\psi=(z_{ax})$. Observe that
$\phi\otimes\psi(t)=\phi\left(\sum_{a,x}z_{ax}v_{ax}\right)$. Since $\phi$ is arbitrary, the positivity of $t$ is equivalent to positivity of the sum $\sum_{a,x}z_{ax}v_{ax}$ for any $(z_{ax})\in V_{m,n}^+$.
\end{proof}
As a consequence we get the following
\begin{proposition}
\label{p:sep}
Let $t= \sum_{a=1}^{m_1}\sum_{b=1}^{m_2}\sum_{x=1}^{n_1}\sum_{y=1}^{m_2}t_{abxy}p_{ax}\otimes p_{by}\in U_{m_1,n_1}\otimes U_{m_2,n_2}$. If $t\in (U_{m_1,n_1}\otimes_\mathrm{min}U_{m_2,n_2})^+$ then
\begin{equation}
\label{p:min}
\sum_{a,b,x,y}z_{ax}w_{by}t_{abxy}\geq 0
\end{equation}
for every $(z_{ax})\in V_{m_1,n_1}^+$ and $(w_{by})\in V_{m_2,n_2}^+$.
\end{proposition}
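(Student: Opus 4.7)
The plan is to peel off the two positivity conditions one at a time, using Lemma \ref{l:vax} to handle one factor and Proposition \ref{p:Ud} to handle the other. First I would regroup $t$ according to the second tensor factor, writing
\[
t \;=\; \sum_{b,y} u_{by} \otimes p_{by}, \qquad u_{by} \;:=\; \sum_{a,x} t_{abxy}\, p_{ax} \;\in\; U_{m_1,n_1}.
\]
This puts $t$ in exactly the shape required by Lemma \ref{l:vax}, with $V = U_{m_1,n_1}$, $k = 1$, and $U_{m_2,n_2}$ in the role of $U_{m,n}$ (the lemma's indices $(a,x)$ are relabelled $(b,y)$ here).

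Next I would apply Lemma \ref{l:vax} directly. From $t \in (U_{m_1,n_1}\otimes_\mathrm{min}U_{m_2,n_2})^+$ the lemma yields
\[
\sum_{b,y} w_{by}\, u_{by} \;=\; \sum_{a,x,b,y} w_{by}\, t_{abxy}\, p_{ax} \;\in\; U_{m_1,n_1}^+
\]
for every $(w_{by}) \in V_{m_2,n_2}^+$. At this stage the problem has been reduced from a tensor-product positivity statement to an ordinary positivity statement inside $U_{m_1,n_1}$. To finish, I would invoke Proposition \ref{p:Ud}, which identifies $V_{m_1,n_1}$ with $U_{m_1,n_1}^{\mathrm{d}}$ via $(z_{ax}) \mapsto \psi_z$, where $\psi_z(p_{ax}) = z_{ax}$; in particular, elements of $V_{m_1,n_1}^+$ are precisely the positive functionals on $U_{m_1,n_1}$. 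Evaluating $\psi_z$ on the positive element above gives $\sum_{a,x,b,y} z_{ax} w_{by}\, t_{abxy} \ge 0$, which is the desired inequality (\ref{p:min}).

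I do not expect a serious obstacle: Lemma \ref{l:vax} already does the real work, and the remainder is a single duality pairing. The only small point that needs to be registered is the symmetry of $\otimes_\mathrm{min}$, which justifies putting $U_{m_1,n_1}$ (rather than $U_{m_2,n_2}$) into the slot that Lemma \ref{l:vax} calls $V$; equivalently, one could first apply the flip isomorphism $U_{m_1,n_1}\otimes_\mathrm{min}U_{m_2,n_2} \cong U_{m_2,n_2}\otimes_\mathrm{min}U_{m_1,n_1}$ and then invoke the lemma without relabelling.
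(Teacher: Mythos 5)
Your proposal is correct and follows essentially the same route as the paper's proof: both regroup $t$ as $\sum_{b,y} v_{by}\otimes p_{by}$ with $v_{by}=\sum_{a,x}t_{abxy}p_{ax}$, apply Lemma \ref{l:vax} to conclude $\sum_{b,y}w_{by}v_{by}\in U_{m_1,n_1}^+$, and then pair with $(z_{ax})\in V_{m_1,n_1}^+$ viewed as a positive functional via Proposition \ref{p:Ud}. The remark about needing a flip isomorphism is superfluous, since $U_{m_1,n_1}\otimes_{\mathrm{min}}U_{m_2,n_2}$ is already of the form $V\otimes_{\mathrm{min}}U_{m,n}$ required by the lemma, but this does not affect correctness.
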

\begin{proof}
Let $v_{by}=\sum_{a,x}t_{abxy}p_{ax}$ for aby $b$ and $y$. By Lemma \ref{l:vax} $t$ is positive in minimal tensor product if and only if $\sum_{b,y}w_{by}v_{by}\in U_{m_1,n_1}^+$ for any $(w_{by})\in V_{m_2,n_2}^+$.
Observe that 
$\sum_{b,y}w_{by}v_{by}=\sum_{a,x}\left(\sum_{b,y}w_{by}t_{abxy}\right)p_{ax}.$
Hence, if $t$ is positive in minimal tensor product, $(w_{by})\in V_{m_2,n_2}^+$, and $(z_{ax})\in V_{m_1,n_1}^+$, then 
$\sum_{a,b,x,y}z_{ax}w_{by}t_{abxy}=\sum_{a,x}z_{ax}\left(\sum_{b,y}w_{by}t_{abxy}\right)\geq 0.$
\end{proof}

Let us notice, that the converse implication in the above proposition does not hold. It is a motivation to formulate the following 
\begin{definition}
Let $V$ and $W$ be ordered linear spaces. We say that a functional $\phi\in (V\otimes W)^\mathrm{d}$ is a \textit{separable positive functional} if $\varrho=\sum_{i=1}^m\varphi_i\otimes\psi_i$ for some $n\in\bN$, $\varphi_1,\ldots,\varphi_n\in V^+$ and $\psi_1,\ldots,\psi_n\in W$. The cone of all separable positive functional will be denoted by $(V^\mathrm{d})^+\otimes(W^\mathrm{d})^+$. We say that an element $t\in V\otimes W$ is \textit{block-positive}, if $\varrho(t)\geq 0$ for every $\varrho\in(V^\mathrm{d})^+\otimes(W^\mathrm{d})^+$. The cone of all block-positive elements in $V\otimes W$ will be denoted by $V\otimes_{\mathrm{bp}}W$.
\end{definition}

\section{Bipartite correlation boxes via operator systems $U_{m,n}$}
\label{sec:bipartite_boxes}

In order to study non-locality we consider the following scenario. Assume that there are two spatially separated and non communicating parties usually called Alice and Bob. They choose among $m$ different observables labeled by $x=1,\ldots,m$ for Alice and $y=1,\ldots,m$ for Bob. After measurment they emit some outcomes. Possible outcomes for Alice are labeled by $a=0,1,\ldots,n-1$ and for Bob by $b=0,1,\ldots,n-1$. Now, for any $x=1,\ldots,m$ and $a=0,\ldots,n-1$ we consider the probability $P(a|x)$ that Alice produces outcome $a$ provided that she was influenced by the input signal $x$. Analogously we define $P(b|y)$ for Bob where $y=1,\ldots,m$ and $b=0,\ldots,n-1$. Finally, if $x,y=1,\ldots,m$ and $a,b=0,1,\ldots,n-1$ let $P(ab|xy)$ be the probability that Alice and Bob produced the pair of outcomes $a$ and $b$ provided that they got inputs $x$ and $y$ respectively. The system $(P(ab|xy))_{a,b=0,1,\ldots,n-1,\;x,y=1,\ldots,m}$ will be called a correlation box.
The notion of correlation box was defined by Tsirelson \cite{T1,T2} in order to study Bell inequalities in quantum setting.

In this section we restate the Fritz's (\cite{F1}) characterization of various correlation boxes by different tensor products. We do it in terms of operator system tensor products of noncommuting cubes. This is the direct generalization  of \cite[Section 7]{FKPT}.

We consider the following classes of boxes

\begin{definition}
\label{d:box}
Given a box $(P(ab|xy))$ we say that it 
\begin{enumerate}
\item
is \textit{non-signalling} if
\begin{equation}
\label{e:ns1}
\sum_{a=0}^{n-1}P(ab|xy)=P(b|y)\;\mbox{for any $x=1,\ldots,m$},
\end{equation}
\begin{equation}
\label{e:ns2}
\sum_{b=0}^{n-1}P(ab|xy)=P(a|x)\;\mbox{for any $y=1,\ldots,m$}.
\end{equation}
\item
has \textit{local hidden variable (LHV)} if there are two families $P_\lambda(a|x)$ and $Q_\lambda(b|y)$ of probability distributions (it means $P_\lambda(a|x)\geq 0$ and $\sum_aP_\lambda(a|x)=1$, $x=1,\ldots,m$, and similarly for $Q_\lambda(b|y)$) and positive numbers $r_\lambda$ with $\sum_\lambda r_\lambda=1$ such that
\be P(ab|xy)=\sum_\lambda P_\lambda(a|x) Q_\lambda(b|y).\ee 
\item
is \textit{quantum} if there are 
\begin{enumerate}
\item
two Hilbert spaces $H_A$ and $H_B$, 
\item
two sets of positive operators $(E_x^a)_{x,a}$ and $(F_y^b)_{y,b}$ acting on $H_A$ and $H_B$ respectively with the property 
$$\sum_aE_x^a=\jed_A\;\mbox{for any $x=1,\ldots,m$},$$
$$\sum_bF_y^b=\jed_B\;\mbox{for any $y=1,\ldots,m$},$$
\item
a density matrix $\varrho$ on $H_A\otimes H_B$
\end{enumerate}
such that
$$P(ab|xy)=\Tr(\varrho(E_x^a\otimes F_y^b))$$
for any $x,y=1,\ldots,m$ and $a,b=0,1,\ldots,n-1$.
\item
is \textit{contextual} if there are 
\begin{enumerate}
\item
a Hilbert space $H$, 
\item
two sets of positive operators $(E_x^a)_{x,a}$ and $(F_y^b)_{y,b}$ acting on $H$ with the property 
$$\sum_aE_x^a=\jed\;\mbox{for any $x=1,\ldots,m$},$$
$$\sum_bF_y^b=\jed\;\mbox{for any $y=1,\ldots,m$},$$
and
$$E_x^aF_y^b=F_y^bE_x^a\;\mbox{for any $x,y,a,b$}$$
\item
a density matrix $\varrho$ on $H$
\end{enumerate}
such that
$$P(ab|xy)=\Tr(\varrho E_x^a F_y^b)$$
for any $x,y=1,\ldots,m$ and $a,b=0,1,\ldots,n-1$.
\end{enumerate}
\end{definition}
Let $m,n\in\bN$ be fixed. We let $\cL$ denote the class of LHV boxes, $\cQ$ -- the class of quantum boxes, $\cC$ -- the class of contextual boxes and $\cP$ -- the class of non-signalling boxes. Then it is known that $\cL\subsetneq\cQ\subset\cC\subsetneq\cP$.
In \cite[Theorem 7.1]{FKPT} it was show that if $m=n=2$ then various classes of correlation boxes can be characterized by means of different tensor products of non-commuting cubes. Now, having "generalized cubes" $U_{m,n}$ (see Remark \ref{r:cub}) we can formulate the theorem for arbitrary numbers inputs and outputs.

For an operator system $V$ let $S(V)$ denote its \textit{state space}, i.e.
\be S(V)=\{\phi\in V^\mathrm{d}:\,\phi(V^+)\subset[0,\infty),\;\phi(e)=1\}.\ee 
\begin{theorem}
\label{thm:main}
We have the following equalities
\begin{enumerate}
\item
$\cP=\{(\phi(p_{ax}\otimes p_{by})):\,\phi\in S(U_{m,n}\otimes_\mathrm{max}U_{m,n})\}$
\item
$\cC=\{(\phi(p_{ax}\otimes p_{by})):\,\phi\in S(U_{m,n}\otimes_\mathrm{c}U_{m,n})\}$
\item
$\cQ=\{(\phi(p_{ax}\otimes p_{by})):\,\phi\in S(U_{m,n}\otimes_\mathrm{min}U_{m,n})\}$
\end{enumerate}
\end{theorem}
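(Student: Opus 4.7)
The plan is to prove each of the three equalities by constructing the correspondence in both directions, using Proposition~\ref{p:univ} as the central bridge: it translates families of POVMs $(E_x^a)_{a,x}$ into UCP maps $\varphi:U_{m,n}\to B(H)$ with $\varphi(p_{ax})=E_x^a$, and vice versa. A shared preliminary observation covers one inclusion in all three cases at once: for any operator system tensor product $\tau$ and any $\omega\in S(U_{m,n}\otimes_\tau U_{m,n})$, the numbers $P(ab|xy):=\omega(p_{ax}\otimes p_{by})$ are non-negative, sum to $1$ over $a,b$ (since $\omega(\jed\otimes\jed)=1$), and satisfy the non-signalling conditions because $\sum_a p_{ax}=\jed$ in $U_{m,n}$ forces $\sum_a P(ab|xy)=\omega(\jed\otimes p_{by})$ to be independent of $x$, with the other marginal treated symmetrically. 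This yields the $\supseteq$ inclusion in each of the three claims, and in particular one direction of~(1).

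For the quantum case~(3), the forward direction is direct: given a quantum realization on $H_A\otimes H_B$, Proposition~\ref{p:univ} produces UCP maps $\varphi:U_{m,n}\to B(H_A)$ and $\psi:U_{m,n}\to B(H_B)$; functoriality of $\otimes_{\mathrm{min}}$ makes $\varphi\otimes\psi$ UCP into $B(H_A)\otimes_{\mathrm{min}}B(H_B)\subseteq B(H_A\otimes H_B)$, and composition with $\mathrm{Tr}(\varrho\,\cdot)$ produces the required state. For the converse I would realize $U_{m,n}$ concretely in some $B(K_i)$ via Choi--Effros; the minimal tensor product then sits inside $B(K_1\otimes K_2)$, and $\omega$ extends by Arveson's theorem to a state of $B(K_1\otimes K_2)$, which is of the form $\mathrm{Tr}(\varrho\,\cdot)$ on a (possibly enlarged) Hilbert space, delivering the quantum realization.

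The contextual case~(2) uses the defining universal property of the commuting tensor product: UCP maps out of $V\otimes_c W$ correspond bijectively to pairs of UCP maps out of $V$ and $W$ whose ranges commute in a common $B(H)$. Starting from commuting POVMs, Proposition~\ref{p:univ} yields such a pair and the universal property delivers a UCP map $U_{m,n}\otimes_c U_{m,n}\to B(H)$, whose composition with $\mathrm{Tr}(\varrho\,\cdot)$ is the required state; conversely, a state on $U_{m,n}\otimes_c U_{m,n}$ dilates via Stinespring to a commuting pair of representations, whose values at $p_{ax}$ and $p_{by}$ recover the commuting POVMs demanded by Definition~\ref{d:box}.

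Finally for the non-signalling case~(1) only the reverse inclusion remains. I would define $\omega$ linearly on $U_{m,n}\otimes U_{m,n}$ by $\omega(p_{ax}\otimes p_{by})=P(ab|xy)$; well-definedness reduces, via a bipartite analogue of Proposition~\ref{p:opi}, exactly to the non-signalling equations~(\ref{e:ns1})--(\ref{e:ns2}). For positivity on the max cone I would invoke the finite-dimensional duality $(V\otimes_{\mathrm{max}}W)^\mathrm{d}\cong V^\mathrm{d}\otimes_{\mathrm{min}}W^\mathrm{d}$ together with the identification $U_{m,n}^\mathrm{d}\cong V_{m,n}$ from Proposition~\ref{p:Ud}: a normalized non-signalling box is precisely an element of $V_{m,n}\otimes V_{m,n}$ whose pairing with any product of completely positive functionals is non-negative, which is the minimal cone and therefore corresponds to a state on the max tensor product. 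I expect the main obstacle to be the converse direction of the quantum case~(3); it is essentially the content that $\otimes_{\mathrm{min}}$ for $C^*$-algebras corresponds to spatial tensor-product representations, and requires careful handling of the state extension to ensure the tensor factor structure on the ambient Hilbert space. All other steps are bookkeeping through the universal properties recorded in Section~1.
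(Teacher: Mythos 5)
Your proposal follows essentially the same route as the paper's proof: part (1) via min--max duality combined with the identification $U_{m,n}^{\mathrm{d}}\cong V_{m,n}$ of Proposition~\ref{p:Ud}, part (2) via the characterization of $\otimes_{\mathrm{c}}$ through pairs of unital completely positive maps with commuting ranges, and part (3) via functoriality of $\otimes_{\mathrm{min}}$ in one direction and extension of the state to $B(H\otimes H)$ in the other (where the paper shares exactly the delicacy about normality of the extended state that you flag). The one step worth tightening is your identification in (1) of ``the cone of elements pairing non-negatively with products of positive functionals'' with the minimal cone: for general operator systems this gives the strictly larger block-positive cone (precisely the distinction the paper draws after Proposition~\ref{p:sep}), and here it is legitimate only because $V_{m,n}$ sits inside the commutative algebra $l_{mn}^\infty$, so positive functionals extend to convex combinations of point evaluations and entrywise positivity, block-positivity and min-positivity coincide --- which is in effect the paper's argument that a non-signalling box lies in $(V_{m,n}\otimes_{\mathrm{min}}V_{m,n})^+$.
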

\begin{proof}
The idea of the proof is basically the same as in \cite{FKPT}.

(1) Let $\cP'=\{\phi(p_{ax}\otimes p_{by}):\,\phi\in S(U_{m,n}\otimes_\mathrm{max}U_{m,n})\}$. Then obviously $\cP'\subset\cP$. To see the converse inclusion notice that any correlation box $P=(P(ab|xy))$ is an element of $(l_{mn}^\infty\otimes_\mathrm{min} l_{mn}^\infty)^+$. This is the consequence of positivity of numbers $P(ab|xy)$. Moreover, let us observe that if $P\in\cP$ then due to conditions (\ref{e:ns1}) and (\ref{e:ns2}) $P\in V_{m,n}\otimes V_{m,n}$ (c.f. (\ref{e:Vmn})). Hence $P\in(V_{m,n}\otimes_\mathrm{min}V_{m,n})^+$. Minimal and maximal tensor products are dual to each other. Thus it follows from Proposition \ref{p:Ud} that $(P(ab|xy))\in (V_{m,n}\otimes_\mathrm{min}V_{m,n})^+$ if and only if $P(ab|xy)=\phi(p_{ax}\otimes p_{by})$ for some positive functional $\phi:U_{m,n}\otimes_\mathrm{max}U_{m,n}\to\bC$. From (\ref{e:ns1}) and (\ref{e:ns2}) we get normalization $\phi(\jed\otimes\jed)=1$. Hence $\cP\in\cP'$

(2)
As previously, let $\cC'=\{\phi(p_{ax}\otimes p_{by}):\,\phi\in S(U_{m,n}\otimes_\mathrm{c}U_{m,n})\}$. Assume that $P\in\cC$. Then 
\be P(ab|xy)=\Tr(\varrho E_x^a F_y^b)\ee  for some $E_x^a$ and $F_y^b$ acting on a Hilbert space $H$ such that $E_x^a\geq 0$, $F_y^b\geq 0$, $\sum_aE_x^a=\jed_A$, $\sum_bF_y^b=\jed_B$ and $E_x^aF_y^b=F_y^bE_x^a$. Hence the linear maps $\alpha,\beta:U_{m,n}\to B(H)$ defined by 
$\alpha(p_{ax})=E_x^a$ for any pair $x,a$ and $\beta(p_{by})=F_y^b$ for any pair $y,b$ are unital completely positive and have commuting ranges. 
It follows from the definition of commuting tensor product (\cite[Section 6]{tens}) that the map $\alpha\cdot\beta:U_{m,n}\otimes_\mathrm{c}U_{m,n}\to B(H)$ defined by $(\alpha\cdot\beta)(u\otimes w)=\alpha(u)\beta(w)$, $u,w\in U_{m,m}$, is a completely positive map. Thus the linear functional $\phi: U_{m,n}\otimes_\mathrm{c}U_{m,n}\to\bC$ by $\phi(t)=\Tr(\varrho(\alpha\cdot\beta)(t))$ turns out to be a state on $U_{m,n}\otimes_\mathrm{c}U_{m,n}$ such that $P(ab|xy)=\phi(E_x^a\otimes F_y^b)$. Hence $P\in\cC'$ and therefore $\cC\subset\cC'$.

Conversely, assume $P\in\cC'$, hence $P(ab|xy)=\phi(E_x^a\otimes F_y^b)$ for some state $\phi$ on $U_{m,n}\otimes_\mathrm{c}U_{m,n}$. By \cite[Corollary 6.5]{tens} there is a Hilbert space $H$, two $^*$-homomorphism $\pi_A,\pi_B:C^*(\bZ_n^{*m})\to B(K)$ with commuting ranges and a unit vector $\xi\in K$ such that $\phi(u\otimes w)=\langle\xi,\pi_A(u)\pi_B(w)\xi\rangle$ for any $u,w\in U_{m,n}$. Now, define $E_x^a=\pi_A(p_{ax})$, $F_y^b=\pi_B(p_{by})$ and $\varrho=|\xi\rangle\langle\xi|$. Then $P(ab|xy)=\Tr(\varrho E_x^aF_y^b)$ and consequently $P\in\cC$. Hence $\cC'\subset\cC$.

(3)
Let $\cQ'=\{\phi(p_{ax}\otimes p_{by}):\,\phi\in S(U_{m,n}\otimes_\mathrm{min}U_{m,n})\}$. Assume that $P\in \cQ$. Then 
\be P(ab|xy)=\Tr(\varrho(E_x^a\otimes F_y^b))\ee  for some $E_x^a$ and $F_y^b$ acting on $H_A$ and $H_B$ respectively such that $E_x^a\geq 0$, $F_y^b\geq 0$, $\sum_aE_x^a=\jed_A$ and $\sum_bF_y^b=\jed_B$. Due to the conditions one can consider maps $\alpha:U_{m,n}\to B(H_A)$ and $\beta:U_{m,n}\to B(H_B)$ defined by
$\alpha(p_{ax})=E_x^a$ for any pair $x,a$ and $\beta(p_{by})=F_y^b$ for any pair $y,b$. The maps $\alpha$ and $\beta$ are unital completely positive. Then it follows from \cite[Theorems 4.4 and 4.6]{tens} that the map
\be \alpha\otimes\beta:U_{m,n}\otimes_\mathrm{min}U_{m,n}\to B(H_A\otimes H_B)\ee  is again unital and completely positive. Hence the linear functional \be \phi:U_{m,n}\otimes_\mathrm{min}U_{m,n}\to\bC\ee  
defined by $\phi(t)=\Tr(\varrho(\alpha\otimes\beta)(t))$ is a state on $U_{m,n}\otimes_\mathrm{min}U_{m,n}$ and $P(ab|xy)=\phi(p_{ax}\otimes p_{by})$. Therefore $\cQ\subset\cQ'$.

Now, assume $P\in\cQ'$. Thus $P(ab|xy)=\phi(p_{ax}\otimes p_{by})$ for some state $\phi$ on $U_{m,n}\otimes_\mathrm{min}U_{m,n}$. Let $H$ be a Hilbert space such that $C^*(\bZ_n^{*m})\subset B(H)$. Then $C^*(\bZ_n^{*m})\otimes_{\mathrm{min}}C^*(\bZ_n^{*m})\subset B(H\otimes H)$ and $\phi$ can be extended to a state $\tilde{\phi}$ on $C^*(\bZ_n^{*m})\otimes_{\mathrm{min}}C^*(\bZ_n^{*m})$. Due separability of this algebra we may assume $\tilde{\phi}$ is the restriction of a normal state on $B(H\otimes H)$ to $C^*(\bZ_n^{*m})\otimes_{\mathrm{min}}C^*(\bZ_n^{*m})$. Therefore, there exists a positive and trace class operator $\varrho$ on $H\otimes H$ such that ${\phi}(t)=\Tr(\varrho t)$ for $t\in U_{m,n}\otimes_\mathrm{min}U_{m,n}$. Now, let us define $H_A=H_B=H$. If $\iota:U_{m,n}\to C^*(\bZ_n^{*m})$ is the canonical embedding then let
$E_x^a=\iota(p_{ax})$ and $F_y^b=\iota(p_{by})$. Thus $P(ab|xy)=\Tr(\varrho(E_x^a\otimes F_y^b))$, and hence $\cQ'\subset\cQ$.
\end{proof}


\section{A nutshell of NPA hierarchy}

\subsection{Quantum behaviors}

NPA hierarchy is an infinite hierarchy of conditions necessarily satisfied by any set of quantum correlations \cite{NPA2008}.



\begin{definition}\label{defi:qbehavior}
The behavior $P$ is a quantum behavior if there exists a pure (normalized) state $|\psi\rangle$ in a Hilbert space $H$, a set of measurement operators
$\{E_a: a\in \widetilde{A}\}$ for Alice, and a set of measurement operators $\{E_b: b\in \widetilde{B}\}$ for Bob such that for all $a\in \widetilde{A}$ and 
$b\in \widetilde{B}$
\ben
&&P(a) = \langle\psi |E_a| \psi \rangle,\nonumber \\
&&P(b) = \langle\psi |E_b| \psi \rangle,\nonumber \\
&&P(a,b) = \langle\psi |E_aE_b| \psi \rangle,
\een
with the measurement operators satisfying

\begin{enumerate}

\item $E_a^\dagger = E_a$ and $E_b^\dagger = E_b,$

\item $E_a E_{a'}= \delta_{aa'} E_a$ if $X(a)=X(a')$ and $E_b E_{b'}= \delta_{bb'} E_b$ if $Y(b)=Y(b'),$

\item $[E_a,\; E_b]=0.$
\end{enumerate}
\end{definition}

\subsection{Sets of operators and sequences}
Let $\widetilde{\mathcal{E}}$ denote the set of projectors of Definition \ref{defi:qbehavior} plus the identity, i.e. 
$\widetilde{\mathcal{E}}= \un \cup \{E_a: a\in \widetilde{A}\} \cup \{E_b: b\in \widetilde{B}\}.$

Let $\mathcal{O} = \{O_1,\cdots, O_n\}$ be a set of $n$ operators, where each $O_i$ is a linear combination of products of projectors in $\widetilde{\mathcal{E}}.$ Define $\mathcal{F}(\mathcal{O})$ as the set of all independent equalities of the form

\begin{equation}\label{eq:O_i}
\sum_{i,j} (F_k)_{i,j} \langle\psi|O_i^{\dagger} O_j| \psi\rangle = g_k(P) \;\;\; k=1,\cdots,m 
\end{equation}

which are satisfied by the operators $O_i,$ where the coefficients $g_k(P)$ are linear functions of the probabilities $P(a,b):$

\begin{equation}\label{eq:g_k}
g_k(P) = (g_k)_0 + \sum_{a,b} (g_k)_{ab} P(a,b).
\end{equation}

Let a sequence $S$ be a product of projectors in $\widetilde{\mathcal{E}}.$ The length $|S|$ of a sequence is the minimum number of projectors needed to generate it. We define $S_n$ to be the set of sequences of length smaller than or equal to $n$ (excluding null sequences). 

\begin{equation}
\begin{split}
S_0 &= \{ \un\}\\
S_1 &= S_0 \cup \{E_a: a\in \widetilde{A}\} \cup \{E_b: b\in \widetilde{B}\}\\
S_2 &= S_0 \cup S_1 \cup \{E_aE_{a'}: a, a' \in \widetilde{A}\} \cup \{E_bE_{b'}: b, b' \in \widetilde{B}\} \cup \{E_aE_b: a\in \widetilde{A}, b\in \widetilde{B}\}\\
S_3 &= \cdots
\end{split}
\end{equation}
Any operator $O_i \in \mathcal{O}$ can be written as a linear combination of operators in $S_n$ for $n$ sufficiently large.

\subsection{A hierarchy of necessary and sufficient conditions}

A certificate $\Gamma^n$ associated to the set of operators $S_n$ is a real positive semi-definite matrix with entries $\{ \Gamma_{s,t}^n: |s|,|t|\leq n,\}$ that satisfies the linear equalities

\begin{equation}\label{eq:certificate1}
\Gamma^n_{1,1} =1, \; \Gamma^n_{1,a} = P(a), \; \Gamma_{1,b}^n = P(b), \; \Gamma_{a,b}^n = P(a,b)
\end{equation}
for all $a\in \widetilde{A}$ and $b\in \widetilde{B},$ and 

\begin{equation}\label{eq:certificate2}
\Gamma^n_{s,t} = \Gamma^n_{u,v} = P(a), \;\; if \;\; S^{\dagger} T= U^{\dagger} V
\end{equation}
for all $|s|,|t|,|u|,|v| \leq n.$ Where the index $s$ associated with the sequence operators $S.$

From Proposition 4 of \cite{NPA2008}, we have $\Gamma_{s,t}^n = \langle \psi|S^\dagger T |\psi \rangle$ if $P$ is a quantum behavior. On the other hand, by the Theorem 8 of \cite{NPA2008}, for a behavior $P,$ the existence of certificate $\Gamma^n$ for all $n\geq 1$ is sufficient to deduce that $P$ is a quantum behavior. 

\subsection{NPA hierarchy vs noncommutative cube}

To unify such two categories we assume Alice (resp. Bob) will choose $X=1,\cdots,m$ (resp. $Y=1,\cdots,m$) inputs and each input will have $a=0,\cdots,n-1$ (resp. $b=0,\cdots,n-1$) outputs.

\begin{theorem}
\label{t:npa}
Following two statements are equivalent:

\begin{enumerate}

\item $P$ is a behavior such that there exists a certificate $\Gamma^n$ of order $n$ for all $n\geq 1.$

\item There exists $\varphi \in S(U_{m,n} \otimes_c U_{m,n}),$ such that $P= \{(\varphi(P_{aX} \otimes P_{bY}))\}.$

\end{enumerate}
\end{theorem}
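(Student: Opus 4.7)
The plan is to reduce Theorem \ref{t:npa} to a combination of Theorem \ref{thm:main}(2) and the main result of Navascu\'es--Pironio--Ac\'in (Theorem 8 of \cite{NPA2008}): both statements in fact characterize the same class of behaviors, namely the contextual class $\cC$ of Definition \ref{d:box}(4). Once this observation is made, the argument splits into two direct implications.

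For the implication (2) $\Rightarrow$ (1), given $\varphi \in S(U_{m,n} \otimes_c U_{m,n})$, I would apply the GNS-style construction already used in the proof of Theorem \ref{thm:main}(2) (coming from \cite[Corollary 6.5]{tens}): there exist a Hilbert space $K$, a unit vector $\xi \in K$, and commuting unital $^*$-homomorphisms $\pi_A, \pi_B \colon C^*(\bZ_n^{*m}) \to B(K)$ with $\varphi(u \otimes w) = \langle \xi, \pi_A(u) \pi_B(w) \xi \rangle$ for all $u, w \in U_{m,n}$. Setting $E_X^a := \pi_A(p_{aX})$ and $F_Y^b := \pi_B(p_{bY})$ produces two commuting families of projective measurements on the same Hilbert space. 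The prescription $\Gamma^n_{s,t} := \langle \xi, S^\dagger T \xi \rangle$ for sequences $S, T$ of length at most $n$ then yields a positive semi-definite matrix satisfying \eqref{eq:certificate1} and \eqref{eq:certificate2}, exactly as in Proposition 4 of \cite{NPA2008}.

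For the converse (1) $\Rightarrow$ (2), I would invoke Theorem 8 of \cite{NPA2008} directly: the existence of a certificate $\Gamma^n$ for every $n \geq 1$ forces the existence of a Hilbert space $H$, a unit vector $|\psi\rangle \in H$, and two commuting families of projective measurements $\{E_X^a\}$, $\{F_Y^b\}$ on $H$ such that $P(ab|XY) = \langle \psi | E_X^a F_Y^b | \psi \rangle$. Taking $\varrho = |\psi\rangle\langle\psi|$ realizes $P$ as an element of $\cC$, and Theorem \ref{thm:main}(2) then supplies the desired state $\varphi \in S(U_{m,n} \otimes_c U_{m,n})$ with $P(ab|XY) = \varphi(p_{aX} \otimes p_{bY})$.

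The main obstacle I anticipate is reconciling the algebraic conventions of NPA (sequences of projectors, relations \eqref{eq:certificate2} whenever $S^\dagger T = U^\dagger V$) with the operator-system framework (positivity in $U_{m,n} \otimes_c U_{m,n}$ and the coproduct description from Remark \ref{r:free}). The universal property $C_u^*(U_{m,n}) \cong C^*(\bZ_n^{*m})$ together with the characterization of $\otimes_c$ via pairs of $^*$-representations with commuting ranges guarantees that every algebraic identity between NPA sequences corresponds to a genuine identity in $B(K)$, and conversely that the certificate entries extend consistently to a state on the whole coproduct. Once this translation is in place, the equivalence is essentially immediate from the two cited theorems.
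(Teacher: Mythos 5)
Your proof is correct, and the paper itself concedes as much: right after stating Theorem \ref{t:npa} the authors remark that it ``of course follows from the result of \cite{NPA2008} \dots and our theorem \ref{thm:main}'', which is precisely your reduction, but they then deliberately give a different, direct proof. The two arguments essentially coincide in the direction $(2)\Rightarrow(1)$: your appeal to \cite[Corollary 6.5]{tens} to produce commuting representations and a vector state, followed by setting $\Gamma^n_{s,t}=\langle\xi, S^\dagger T\xi\rangle$ and citing Proposition 4 of \cite{NPA2008}, is the same GNS-type construction the paper uses. The genuine divergence is in $(1)\Rightarrow(2)$. You invoke the full conclusion of NPA's Theorem 8 to reconstruct a commuting Hilbert-space model, identify $P$ as an element of $\cC$, and then re-enter the operator-system world through Theorem \ref{thm:main}(2). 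The paper instead extracts only the limiting positive semi-definite matrix $\Gamma^\infty$ from the \emph{proof} of Theorem 8, realizes its entries as Gram vectors in $\ell_2(G)$ for $G=\bZ_n^{*m}\times\bZ_n^{*m}$, builds a functional on $\bC(G)$ via the left regular representation, extends it by Hahn--Banach to a state on $C^*(G)\cong C^*(\bZ_n^{*m})\otimes_{\max}C^*(\bZ_n^{*m})$, and restricts along $U_{m,n}\otimes_c U_{m,n}\subseteq_{\mathrm{coi}}C^*(\bZ_n^{*m})\otimes_{\max}C^*(\bZ_n^{*m})$. Your route is shorter and modular, but outsources the analytic content to NPA's reconstruction theorem (and to the proof of Theorem \ref{thm:main}(2), which repeats a similar dilation argument); the paper's route shows that the certificate data directly encodes a state on the group algebra, keeping the operator-system formulation self-contained and independent of the Hilbert-space reconstruction. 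Your closing concern about reconciling the indexing of NPA sequences with the group-algebra picture is exactly what the paper's preparatory lemma (the bijection $\pi$ and the $*$-isomorphism $\tau$ onto $\bC(\bZ_n^{*m}\times\bZ_n^{*m})$) is there to handle, so that point is resolved as you anticipate.
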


This theorem of course follows from the result of \cite{NPA2008} just mentioned  in previous section,
and our theorem \ref{thm:main}. However  here we will give a direct proof which has its independently interest. 
Before we give the proof, we need the following easy observation:


\begin{lemma}

\begin{enumerate}
\item[(i)] The set $\bigcup_n S_n$ is isomorphism to the group $\mathbb{Z}_n^{*m} \times  \mathbb{Z}_n^{*m},$ i.e. every sequence $S\in \bigcup_nS_n$ is one to one correspond to a reduced ward $s\in \mathbb{Z}_n^{*m} \times \mathbb{Z}_n^{*m}.$
 
\item[(ii)] If $\A$ is the *-algebra generated by $\bigcup_n S_n,$ then $\A$ is *-isomorphism to the group algebra $\mathbb{C}(\mathbb{Z}_n^{*m} \times \mathbb{Z}_n^{*m}).$
\end{enumerate}   
\end{lemma}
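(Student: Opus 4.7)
The plan is to prove (ii) first and then deduce (i) from it by tracking the combinatorial form of each sequence. The key observation is that the relations imposed on the projectors in Definition~\ref{defi:qbehavior} are precisely the defining relations of $\bC(\bZ_n^{*m} \times \bZ_n^{*m})$ once one presents this group algebra in its basis of spectral projections, exactly as in Section~2 of the paper.

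For (ii), I would fix an input $X$ and observe that the family $\{E_{a,X}\}_{a=0}^{n-1}$ is a partition of unity by orthogonal self-adjoint idempotents, so it spans a commutative $*$-algebra of dimension $n$ that is canonically $*$-iso\-morphic to $l_n^\infty \cong \bC(\bZ_n)$ via the correspondence $E_{a,X} \leftrightarrow p_a$ of formula~(\ref{l:pa}). Since no relations are imposed between distinct Alice inputs, her $*$-subalgebra $\A_A \subset \A$ is the unital free product of $m$ copies of $\bC(\bZ_n)$, which by Remark~\ref{r:free} equals $\bC(\bZ_n^{*m})$. The same argument identifies Bob's subalgebra $\A_B$ with $\bC(\bZ_n^{*m})$. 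Condition~(3) of Definition~\ref{defi:qbehavior} then says that $\A_A$ and $\A_B$ commute elementwise with no further relations, so $\A$ is the algebraic tensor product $\A_A \otimes \A_B$; combining this with the standard identity $\bC(G) \otimes \bC(H) \cong \bC(G \times H)$ yields $\A \cong \bC(\bZ_n^{*m} \times \bZ_n^{*m})$.

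For (i), I would reduce each $S \in \bigcup_n S_n$ to a canonical form: commutativity moves all Alice projectors to the left of all Bob projectors, then within each side consecutive projectors drawn from the same PVM either collapse to a single projector (same outcome) or force $S = 0$ (different outcomes). What survives is an alternating product $E_{a_1,X_1}\cdots E_{a_k,X_k} \cdot E_{b_1,Y_1}\cdots E_{b_l,Y_l}$ with $X_i \neq X_{i+1}$ and $Y_j \neq Y_{j+1}$, and such canonical forms are enumerated by the same combinatorial data as reduced words in $\bZ_n^{*m} \times \bZ_n^{*m}$. The main obstacle is that the asserted ``one-to-one correspondence'' cannot be read as a naive bijection of individual elements, because each spectral projector $p_a$ is a non-trivial linear combination of group elements via the inverse Fourier transform $p_a = \tfrac{1}{n}\sum_k \omega^{ak} s^k$; the correct reading, once (ii) is in hand, is that the canonical reduced forms (modulo the relations of Definition~\ref{defi:qbehavior}) constitute a vector-space basis of $\A$ naturally indexed by reduced words of $\bZ_n^{*m} \times \bZ_n^{*m}$, and this final indexing becomes a routine bookkeeping exercise.
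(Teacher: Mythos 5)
Your argument for part (ii) is sound and is essentially a structural rephrasing of the paper's proof: the paper simply writes down the explicit $*$-isomorphism $\tau$ sending $E_{a,X}$ to the spectral element $\frac{1}{n}\sum_j\omega^{ja}\delta_{s_X}^j$ of formula (\ref{l:pa}), which is exactly the composite of your three identifications (single PVM $\to l_n^\infty\cong\bC(\bZ_n)$, free product over the $m$ inputs, tensor product over the two commuting parties). Your version makes the universal properties explicit where the paper merely asserts that $\tau$ is clearly a $*$-isomorphism, so on this half you are, if anything, more complete than the source.

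Part (i) is where the gap lies. Your reduction to canonical alternating words $E_{a_1,X_1}\cdots E_{a_k,X_k}\cdot E_{b_1,Y_1}\cdots E_{b_l,Y_l}$ with $X_i\neq X_{i+1}$ and $Y_j\neq Y_{j+1}$ is fine, but your concluding claim that these canonical forms \emph{constitute a vector-space basis of $\A$ indexed by reduced words} fails on a count: each letter of a canonical projector word admits $n$ outcomes, while each letter of a reduced word in $\bZ_n^{*m}$ admits only the $n-1$ nontrivial exponents of its generator; moreover the canonical forms are linearly dependent (for instance $\sum_{a}E_{a,X}E_{b,Y}=E_{b,Y}$ expresses length-two words through length-one words), so they are a spanning set, not a basis. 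To be fair, the paper's own proof of (i) has the same defect --- its map $\pi(E_{a,X})=(s_X^{a+1},\un)$ sends $E_{n-1,X}$ to the identity and is visibly not injective --- so statement (i) should really be read as the weaker assertion that $\bigcup_nS_n$ generates, and is parametrized by, the group algebra, which your part (ii) already delivers. If you want an honest bijection, match the reduced word $s_{X_1}^{k_1}\cdots s_{X_r}^{k_r}$ not with a projector word but with the monomial $\tau^{-1}\bigl(\delta_{s_{X_1}}^{k_1}\cdots\delta_{s_{X_r}}^{k_r}\bigr)$, i.e.\ with words in the unitaries $\sum_a\omega^{-a}E_{a,X}$ rather than in the projectors themselves.
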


\begin{proof}
(i). Let $s_X$ denote the generator of the $X$-th copy. Define the map $\pi: \bigcup_n S_n \mapsto  \mathbb{Z}_n^{*m} \times \mathbb{Z}_n^{*m}$ as:
\begin{equation}
\begin{split}
 \pi(E_a) &= (s_X^{a+1}, \un), \;\; a \in X(a), \\ \pi(E_b) &= (\un, s_Y^b), \;\; b\in Y(b),\\
 \pi(E_aE_{a'}) &= (s_X^{a+1} s_{X'}^{a'+1},\un), \;\; a \in X(a), a'\in X'(a'),\\
 \pi(E_bE_{b'}) &= (\un ,s_Y^{b+1} s_{Y'}^{b'+1}), \;\; b \in Y(b), b'\in Y'(b'),\\
 \pi(E_aE_{b}) &= (s_X^{a+1} ,s_Y^{b+1}), \;\;  a\in X(a), b \in Y(b),\\
 &\cdots
\end{split}
\end{equation}
It is easy to check that $\pi$ is a bijection.

(ii). Define the map $\tau: \A \mapsto \mathbb{C}(\mathbb{Z}_n^{*m} \times \mathbb{Z}_n^{*m})$ as:
\begin{equation}
\begin{split}
 \tau(E_a) &= \Big(\frac{1}{n} \sum_{j=0}^{n-1} \omega^{ja}\delta_{s_X}^j, \delta_{\un} \Big), \;\; a\in X(a), \\ \tau(E_b) &= \Big(\delta_{\un}, \frac{1}{n} \sum_{j=0}^{n-1} \omega^{jb}\delta_{s_Y}^j \Big), \;\; b\in Y(b),\\
 \tau(E_aE_{a'}) &= \Big( \frac{1}{n^2} \sum_{j,j'=0}^{n-1} \omega^{ja+j'a'}\delta_{s_X}^j\delta_{s_{X'}}^{j'}, \delta_{\un} \Big), \;\; a \in X(a), a'\in X'(a'),\\
 \tau(E_bE_{b'}) &= \Big( \delta_{\un} , \frac{1}{n^2} \sum_{j,j'=0}^{n-1} \omega^{jb+j'b'}\delta_{s_Y}^j\delta_{s_{Y'}}^{j'} \Big), \;\; b \in Y(b), b'\in Y'(b'),\\
 \tau(E_aE_{b}) &= \Big( \frac{1}{n} \sum_{j=0}^{n-1} \omega^{ja}\delta_{s_X}^j ,\frac{1}{n} \sum_{j=0}^{n-1} \omega^{jb}\delta_{s_Y}^j\Big), \;\;  a\in X(a), b \in Y(b),\\
 &\cdots
\end{split}
\end{equation}
where the $\{(\delta_{s_X}, \delta_{\un}), (\delta_{\un}, \delta_{s_Y}), (\delta_{s_X}, \delta_{s_Y}): X=1,\ldots, m, Y=1,\ldots,m\}$ is the basis of $\mathbb{C}( \mathbb{Z}_n^{*m} \times \mathbb{Z}_n^{*m}).$ It is clear that $\tau$ is a *-isomorphism.
\end{proof}

\begin{remark}\label{remark:O}
By (ii) of this lemma, every operator $O_i \in \mathcal{O}$ is an element of $\mathbb{C}( \mathbb{Z}_n^{*m} \times \mathbb{Z}_n^{*m}).$
\end{remark}

\begin{proof}[Proof of Theorem \ref{t:npa}] In this proof, we denote group $\mathbb{Z}_n^{*m} \times \mathbb{Z}_n^{*m}$ by $G.$ We first prove (i)$\Rightarrow$(ii). 
As we know from the proof of Theorem 8 in \cite{NPA2008}, there exists a semi-positive matrix $\Gamma^{\infty}= (\Gamma^{\infty}_{s,t}),$ where index $s,t$ associate with the sequence $S, T \in \bigcup_n S_n.$ By the previous Lemma. There also exists a semi-positive matrix, we still denote as $\Gamma= (\Gamma^{\infty}_{s,t}), s, t\in G.$ Because of its positivity, there exists an infinite family of vectors $\{|s\rangle \in \ell_2(G), s\in G\}$ such that 
$\Gamma_{s,t} = \langle s|t\rangle.$ Now let $\lambda: G \mapsto B(\ell_2(G))$ be the left regular unitary representation of $G.$ Then we can get an induced *-algebra representation $\pi: \mathbb{G} \mapsto B(\ell_2(G))$ \cite{F2},

\be \pi\left(\sum_g x_g \delta_g\right) = \sum_g x_g \lambda(g), x_g \in \mathbb{C}.\ee   

Now define a functional $\varphi: \mathbb{C}(G) \mapsto \mathbb{C}$ as following:
\be \varphi(x) = \langle \un| \pi(x) \un \rangle.\ee 
Since $|\varphi(x)| \leq \|\pi(x)\|_{B(\ell_2(G))} \leq \|x\|_{C^*(G)}$ for any $x\in \mathbb{C}(G),$ thus by Hahn-Banach theorem, $\varphi$ can extend to a functional on $C^*(G)$ with norm less or equal one (actually the norm of $\varphi$ is one). The positivity and $\varphi(P_{aX} \otimes P_{bY})= P(a,b), a\in X(a), b\in Y(b)$ is clear. Now we already get a state $\varphi$ on $C^*(G)$ such that $P= (\varphi(P_{aX} \otimes P_{bY})).$ Since $C^*(G) = C^*(*_m\mathbb{Z}_n) \otimes_{\max} C^*(*_m\mathbb{Z}_n)$ and
$U_{m,n} \otimes_c U_{m,n} \subseteq_{coi} C^*(*_m\mathbb{Z}_n) \otimes_{\max} C^*(*_m\mathbb{Z}_n),$ we complete the proof. 

$(ii) \Rightarrow (i).$ Suppose we have a state $\varphi$ on $U_{m,n} \otimes_c U_{m,n},$ i.e. on $C^*(G).$ Then by the GNS construction, there exist a Hilbert space $H,$ a cyclic vector $\xi \in H$ and a *-representation $\pi: C^*(G) \mapsto B(H),$ such that

\be \varphi(x) = \langle \xi| \pi(x) \xi \rangle,\;\; x\in C^*(G).\ee 
By the remark \ref{remark:O}, we have $O_i \in \mathbb{C}(G),$ thus we can define a matrix $\Gamma_{i,j}$ as following:
\be \Gamma_{i,j} = \langle \xi| \pi(O^\dagger_i O_j) \xi \rangle.\ee  To prove it is a certificate associated to the set of operator $S_n$ is similar to the proof of Proposition 4 in \cite{NPA2008}.
\end{proof}
\begin{remark}
It was mentioned by Fritz in \cite[Remark 3.5]{F2} that NPA hierarchies can be described by states on some suitable tensor products for group
C*-algebras. Theorem \ref{t:npa} basically restates this remark but in the context of operator systems. 
\end{remark}
\section{Approximation of $U_{2,2}\otimes U_{2,2}$}
The Hilbert space and projectors that can reproduce any quantum box is quite huge, in particular, nonseparable.
There is a question whether in some cases we can have finite-dimensional  Hilbert space which will do the job, or at least,
whether one can approximate the set of quantum boxes by boxes coming from finite-dimensional Hilbert spaces. 
Here we shall provide elementary construction of such approximation in the case of $n=m=2$. 
In such case, all needed projectors can be constructed out of two, which we will  choose in such a way, that 
the principal angles will be placed uniformly on a quarter of circle. 
We shall use result of 
Masanes \cite{Masanes} who showed  that for $m=n=2$ any point of $\cQ$ 
can be realized on Hilbert space  $\bC^2\otimes \bC^2$, i.e. for any box $p(ab|xy)\in\cQ$ 
there exist state $\psi\in \bC^2 \otimes \bC^2$ and projectors $P_{ax}^A$, $P_{by}^B$ 
with $\sum_aP_{ax}^A=I_A$ and $\sum_b P_{by}^B=I_B$ such that 
\be
p(ab|xy)=\<\psi| P_{ax}^A\ot P_{by}^B |\psi\>.
\label{eq:ext}
\ee
We now want to construct set of projectors on a larger Hilbert space, that would universally work for all boxes 
(the dimension would depend on the needed accuracy of approximation). 
To this end we consider  Hilbert space
\be
\label{eq:hilbert-space}
H=H_A\ot H_B, \quad H_A=\bigoplus_{k=1}^{N} H_k^A,\quad 
H_B=\bigoplus_{k=1}^{N} H_k^B
\ee
with $H^A_{k}\simeq H^B_l\simeq\bC^2$.
Now, Alice and Bob will have the same projectors given by 

\begin{eqnarray}
\label{eq:kl}
&&\Pr_{0,0}=\oplus_{k=1}^N |0\>\<0|,\quad \Pr_{1,0}=\un-\Pr_{0,0}, \nonumber \\
&&\Pr_{0,1}=\oplus_{k=1}^N |\psi_k\>\<\psi_k| ,\quad \Pr_{1,1}=\un-\Pr_{0,1}
\end{eqnarray}
where 
\be
\label{eq:psi_k}
\psi_k=\cos \alpha_k |0\> + \sin \alpha_k |1\>
\ee
with $\alpha_k=\frac{k\pi}{2N}$.
Thus $\Pr_{0,0}$ and $\Pr_{1,0}$ are chosen in such a way, that the principal angles (which are $\alpha_k$ in this case) fill uniformly 
the quarter of circle.  

Let us  define projectors acting on $\bC^2$
\begin{eqnarray}
&&P_{0,0}^{Ak}=P_{0,0}^{Bk}=|0\>\<0|,\quad P_{1,0}^{Ak}=P_{1,0}^{Bk}=|1\>\<1|\nonumber\\
&&P_{0,1}^{Ak}=P_{0,1}^{Bk}=|\phi_k\>\<\phi_k|,\quad P_{1,1}^{Ak}=P_{1,1}^{Bk}=\un - |\phi_k\>\<\phi_k|.
\end{eqnarray}

Now, for any state $\rho$ define $p_\rho$ as 
\be
p_\rho(ab|xy)=\Tr(\rho \Pr_{ax}\ot \Pr_{by}).
\ee
Further let us consider the following norm between two boxes by 
\be
||p-p'||_1=\sum_{a,b,x,y} |p(ab|xy) - p'(ab|xy)|.
\ee
We now prove the following proposition, showing, that our projectors \eqref{eq:kl} together with arbitrary state, 
can be used to approximate arbitrary quantum box:
\begin{proposition} 
For arbitrary $\epsilon>0$ and a box $p$ with $m=n=2$ from $\cQ$ we can find $N$ and a  state $\rho$ acting on 
the Hilbert space \eqref{eq:hilbert-space} such that 
\be
||p-p_\rho||\leq \epsilon
\ee
\end{proposition}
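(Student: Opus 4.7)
The plan is to realize any quantum box by embedding a suitable qubit-qubit state into a single block $H_k^A\ot H_l^B$ of the Hilbert space \eqref{eq:hilbert-space}, chosen so that the principal angles of the universal projectors \eqref{eq:kl} match those of a Masanes realization as closely as possible; the final bound is then a routine operator-norm continuity estimate.

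First, for $p\in\cQ$ with $m=n=2$, Masanes' theorem supplies a state $|\Psi\>\in\bC^2\ot\bC^2$ and projectors $P^A_{ax},P^B_{by}$ on $\bC^2$ satisfying $\sum_aP^A_{ax}=I_A$, $\sum_bP^B_{by}=I_B$ and $p(ab|xy)=\<\Psi|P^A_{ax}\ot P^B_{by}|\Psi\>$. Next I exploit local unitary freedom: replacing $|\Psi\>$ by $(U_A\ot U_B)|\Psi\>$ and each projector by its conjugate preserves all entries of $p$. For generic rank-one projectors, $U_A$ can be chosen so that $P^A_{0,0}=|0\>\<0|$; a subsequent diagonal unitary $\mathrm{diag}(1,e^{-i\beta_A})$ (which commutes with $P^A_{0,0}$) removes the relative phase in $P^A_{0,1}$, putting it in the form $|\phi^A\>\<\phi^A|$ with $|\phi^A\>=\cos\alpha^A|0\>+\sin\alpha^A|1\>$ and $\alpha^A\in[0,\pi/2]$. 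The same reduction on Bob's side yields $\alpha^B\in[0,\pi/2]$. Degenerate cases where a projector has rank $0$ or $2$ are handled by supporting the embedded state on $|1\>$ or $|0\>$ accordingly and require no new idea.

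Second, I pick $k,l\in\{1,\ldots,N\}$ minimizing $|\alpha_k-\alpha^A|$ and $|\alpha_l-\alpha^B|$, so that both quantities are at most $\pi/(2N)$. Letting $V:\bC^2\ot\bC^2\to H$ be the natural isometry onto $H_k^A\ot H_l^B$, I set $\rho=V|\Psi\>\<\Psi|V^*$. Because the projectors $\Pr_{ax},\Pr_{by}$ are block-diagonal with respect to \eqref{eq:hilbert-space}, the trace $\Tr(\rho\,\Pr_{ax}\ot\Pr_{by})$ reduces to $\<\Psi|\widetilde P^{A,k}_{ax}\ot\widetilde P^{B,l}_{by}|\Psi\>$, where $\widetilde P^{A,k}_{ax}$ denotes the qubit projector of \eqref{eq:kl} in the $k$-th summand. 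One checks $\widetilde P^{A,k}_{a,0}=P^A_{a,0}$ exactly, and for $x=1$ the two rank-one projectors differ in operator norm by $|\sin(\alpha^A-\alpha_k)|\leq\pi/(2N)$; analogously on Bob's side. The standard bound $\|A\ot B-A'\ot B'\|\leq\|A-A'\|+\|B-B'\|$ for contractions then yields $|p(ab|xy)-p_\rho(ab|xy)|\leq\pi/N$ for each of the $16$ tuples $(a,b,x,y)$, hence $\|p-p_\rho\|_1\leq 16\pi/N$, and $N\geq 16\pi/\epsilon$ suffices.

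The main delicate step is the canonical-form reduction: the phase absorption is what reduces each pair of Alice's (resp.\ Bob's) projectors to a single real principal angle, and this is precisely what makes the one-parameter family \eqref{eq:kl} already sufficient to approximate every quantum box. Once this reduction is done, the rest is compactness of $[0,\pi/2]$ together with Lipschitz dependence of the probabilities on the two angles, so no further conceptual work is needed.
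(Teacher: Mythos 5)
Your proof follows essentially the same route as the paper's: Masanes' two-qubit realization, a local-unitary reduction to real principal angles in $[0,\pi/2]$, embedding the (rotated) state into the block $H_{k}^A\otimes H_{l}^B$ whose grid angles best match, and a continuity estimate over the $16$ entries. If anything your final step is the cleaner one: you compare the projectors in operator norm, getting $O(1/N)$ per entry and hence $N\geq 16\pi/\epsilon$, whereas the paper's displayed estimate is garbled at exactly this point (it takes the embedded state to be the product $|\psi_{k_0}\>|\psi_{l_0}\>$ rather than the embedded Masanes state, compares states instead of projectors, and asserts an $O(1/N^2)$ bound) --- either way the proposition holds.
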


\begin{proof}
Since $p\in \cQ$, and $n=m=2$ we can apply Masanes construction so that $p$ is given by \eqref{eq:ext}. 
Applying suitable unitary $U_A\ot U_B$ to the state and projectors in \eqref{eq:ext} we obtain 
\be
p(ab|xy)=\<\psi'|Q_{ax}^A\ot Q_{by}^B |\psi'\>
\ee
where  $\psi'=U_A\ot\U_B\psi$ and
\begin{eqnarray}
&&Q_{00}^A=Q_{00}^B=|0\>\<0|,\quad Q_{01}^A=|\psi\>\<\psi|, \quad Q_{01}^B=|\phi\>\<\phi|\nonumber \\
&&Q_{10}^A=Q_{10}^B=|1\>\<1|,\quad Q_{01}^A=|\psi^\perp\>\<\psi^\perp|, \quad Q_{01}^B=|\phi^\perp\>\<\phi^\perp|\nonumber \\
\end{eqnarray}
with
\be
\psi=\cos \alpha|0\>+ \sin \alpha |1\>, \quad \phi=\cos \beta|0\>+ \sin \beta |1\>
\ee
with $\alpha,\beta\in[0,\pi/2]$. We now find $k_0$ and $l_0$ such that 
\be
\label{eq:scalar}
\<\psi_{k_0}|\psi\>\leq \cos\frac{\pi}{4N},\quad \<\psi_{l_0}|\phi\>\leq \cos\frac{\pi}{4N}
\ee
where $\psi_k$ are given by \eqref{eq:psi_k}.
We then treat the two qubit state $|\psi_{k_0}\>|\psi_{l_0}\>$ as acting on Hilbert space $H_{k_0}^A\ot H^B_{l_0}$,
and let $\psi_{k_0,l_0}$ to be the above state embedded into the total Hilbert space $H_{AB}$ of 
\eqref{eq:hilbert-space}. We then have  for $\rho=|\psi_{k_0,l_0}\>\<\psi_{k_0,l_0}|$ 
\be
p_\rho(ab|xy)=
\<\psi_{k_0,l_0} | \Pr_{ax}\ot \Pr_{by} |\psi_{k_0,l_0}\>  =
\<\psi_{k_0,l_0} | P_{ax}^{Ak_0}\ot P_{by}^{Bl_0} |\psi_{k_0,l_0}\>.
\ee
One then finds that 
\begin{eqnarray}
&&\sum_{a,b,x,y} |p_\rho(ab|xy) - p(ab|xy)|\nonumber\\
&&= \sum_{a,b,x,y} |\<\psi_{k_0,l_0} | p_{ax}^{Ak_0}\ot p_{by}^{Bl_0}| \psi_{k_0,l_0}\> -  
\<\psi'|Q_{ax}^A\ot Q_{by}^B |\psi'\>| \nonumber\\
&&\leq 8\sqrt{1-\<\psi_{k_0}|\psi'\>} +8\sqrt{1-\<\psi_{l_0}|\psi'\>}
\leq 16 \sin^2\biggl(\frac{\pi}{4}\biggr) \leq \frac{\pi^2}{N^2}
\end{eqnarray}
where we used \eqref{eq:scalar}, and $\psi_i$ are given by \eqref{eq:psi_k}.
Thus taking $N\geq \frac{\pi}{\sqrt\ep}$, we obtain 
$\sum_{a,b,x,y} |p_\rho(ab|xy) - p(ab|xy)|$ which ends the proof.
\end{proof}

The construction seem not be possible if there are more than two inputs on both sides.
Namely, it is quite likely, that the value of so called $I_{2233}$ Bell's inequality does not attains maxima on any 
finite Hilbert space \cite{VidickWehner}. If true that would imply, that there exist correlation boxes that cannot be represented on 
finite dimensional Hilbert space.  Let us also mention, that \cite{WernerScholz} 
it is proved that if Tsirelson problem has positive solution for some $m,n$, 
then for any quantum box, there is finite-dimensional approximation  of the box, 
where the projectors depend on the box.
This allows  to build  $\epsilon$-approximation of any box with fixed projectors, that do not depend on the box 
in an analogous way as above, if only Tsirelson problem has positive answer.
For $n=m=2$ we know more - namely any box is exactly reproduced by use of two qubits, hence the construction 
above is so simple.

\section{Steering vs operator systems $U_{m,n}$}
Now, assume that Alice 
can choose between $m$ measurement settings, each of which can result in one of $n$ outcomes. Suppose Bob has $d$-dimensional quantum system. Following \cite{Pus} we define an assemblage as a set $(\sigma(a|x))_{x=1,\ldots,m,\,a=0,\ldots,n-1}$ such that $\sigma(a|x)\in M_d(\bC)^+$ for any pair $x,a$, the sum $\sum_a\sigma(a|x)$ does not depend on $x$ and $\Tr\left(\sum_a\sigma(a|x)\right)=1$ for every $x$.

\begin{definition}
We will say that an assemblage $(\sigma(a|x))$ 
\begin{enumerate}
\item
has \textit{local hidden state (LHS)} when 
\be \sigma(a|x)=\sum_\lambda p_\lambda(a|x)\sigma_\lambda\ee 
for some positive numbers $r_\lambda$ with $\sum_\lambda r_\lambda=1$, a family $P_\lambda(a|x)$ of probability distributions (i.e. $p_\lambda(a|x)\geq 0$ and $\sum_aP_\lambda(a|x)=1$ for each $\lambda$) and a family of positive matrices $\sigma_\lambda\in M_d(\bC)^+$ such that $\Tr\sigma_\lambda=1$.
\item
is \textit{quantum} if
\be 
\sigma(a|x)=\Tr_A(\varrho(E_x^a\otimes\un_B))
\ee 
where $E_x^a$ are some positive operators acting on a Hilbert space $H_A$ such that $\sum_aE_x^a=\jed_A$ for every $x$, and $\varrho$ is a density matrix on $H_A\otimes\bC^d$.
\end{enumerate}
\end{definition}
Let natural numbers $m,n,d$ be fixed. We let $\cP_\mathrm{s}$ denote the class of all assemblages, $\cQ_\mathrm{s}$ -- the class of all quantum assemblages and $\cL_\mathrm{s}$ -- the class of LHS assemblages. Obviously
$\cL_\mathrm{s}\subset\cQ_\mathrm{s}\subset\cP_\mathrm{s}$. It was noticed by Schr{\"o}dinger that $\cP_\mathrm{s}=\cQ_\mathrm{s}$ (see \cite{HJW} for a discussion on that topic).

For an operator system $V$ and a Hilbert space $H$ let $CP(V,H)$ denote the set of all completely postive maps $\alpha:V\to S_1(H)$ such that $\Tr\,\alpha(\jed)=1$. 

If $\xi\in\bC^d$ is a unit vector then it can considered as an element of $M_{1,d}(\bC)$. So for any $v\in V^+$ we have that $\xi^*v\xi\in M_d(V)^+$. But $\xi^*v\xi$ is nothing but $|\xi\rangle\langle\xi|\otimes v$. Let $M_d(\bC)^+\otimes V^+$ denote the subcone of $M_d(V)^+$ generated by all elements of that form. We will refere to elements of $M_d(\bC)^+\otimes V^+$ as separable elements.
\begin{theorem}
We have the following equality
\begin{equation}
\cQ_\mathrm{s}=\{(\alpha(p_{ax})):\,\alpha\in CP(U_{m,n},\bC^d)\},
\end{equation}
\end{theorem}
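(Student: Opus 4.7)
The plan is to establish the two inclusions separately, following the pattern of the proof of Theorem~\ref{thm:main}(3): the forward inclusion relies on the universal property of $U_{m,n}$ from Proposition~\ref{p:univ}, while the reverse inclusion uses Arveson's extension theorem combined with Stinespring dilation for CP maps into $M_d(\bC)$.

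First, for $\cQ_\mathrm{s}\subseteq\{(\alpha(p_{ax})):\,\alpha\in CP(U_{m,n},\bC^d)\}$, I would start from a quantum assemblage $\sigma(a|x)=\Tr_A(\varrho(E_x^a\otimes\un_B))$ with POVMs $(E_x^a)$ on some $H_A$ and a density operator $\varrho$ on $H_A\otimes\bC^d$. By Proposition~\ref{p:univ} the family $(E_x^a)$ produces a unique unital completely positive map $\beta:U_{m,n}\to B(H_A)$ with $\beta(p_{ax})=E_x^a$, and the map $\Phi:B(H_A)\to M_d(\bC)$ given by $\Phi(X)=\Tr_A(\varrho(X\otimes\un_B))$ is completely positive. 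The composition $\alpha:=\Phi\circ\beta$ then lies in $CP(U_{m,n},\bC^d)$ and satisfies $\alpha(p_{ax})=\sigma(a|x)$ together with $\Tr\alpha(\jed)=\Tr\varrho=1$.

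For the reverse inclusion, I would fix $\alpha\in CP(U_{m,n},\bC^d)$ and observe first that $\sigma(a|x):=\alpha(p_{ax})$ already satisfies the defining properties of an assemblage, since $p_{ax}\geq 0$, $\sum_a p_{ax}=\jed$, and $\Tr\alpha(\jed)=1$. To produce the quantum realization I would extend $\alpha$, via Arveson's theorem, to a CP map $\tilde\alpha:C^*(\bZ_n^{*m})\to M_d(\bC)$ with $\tilde\alpha(\jed)=\alpha(\jed)$, and then apply Stinespring to obtain a Hilbert space $K$, a $^*$-representation $\pi:C^*(\bZ_n^{*m})\to B(K)$, and a contraction $V:\bC^d\to K$ with $\tilde\alpha(\cdot)=V^*\pi(\cdot)V$ and $V^*V=\alpha(\jed)$. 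Setting $H_A:=K$ and $E_x^a:=\pi(p_{ax})$ supplies POVMs on Alice's side (since $\sum_a E_x^a=\pi(\jed)=\un_K$), and the state $\varrho$ on $H_A\otimes\bC^d$ is built as a rank-one projection onto a purification vector constructed from $V$.

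The step that I expect to be the main obstacle is this last one, because the conjugation convention matters. The naive choice $|\Omega\rangle=\sum_i Ve_i\otimes e_i$ (with $\{e_i\}$ an orthonormal basis of $\bC^d$) yields $\Tr_A(|\Omega\rangle\langle\Omega|(E_x^a\otimes\un_B))=(V^*\pi(p_{ax})V)^T$, which differs from $\alpha(p_{ax})$ by a transpose. This will be remedied by replacing $\pi$ with its complex conjugate representation $\bar\pi$ on $\bar K$ (equivalently, using the mirror vector $\sum_i Ve_i\otimes \bar e_i$ under the anti-unitary identification $\bC^d\simeq\overline{\bC^d}$), after which the transpose is absorbed and one obtains $\Tr_A(\varrho(E_x^a\otimes\un_B))=\alpha(p_{ax})$ as required. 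A cleaner alternative is to appeal to the Schr\"odinger--HJW identification $\cP_\mathrm{s}=\cQ_\mathrm{s}$ recalled just before the statement: once one knows that $\alpha(p_{ax})\in \cP_\mathrm{s}$, this equality immediately gives $\alpha(p_{ax})\in \cQ_\mathrm{s}$, bypassing the explicit purification altogether.
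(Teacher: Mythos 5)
Your proof is correct, and its overall architecture coincides with the paper's: the forward inclusion uses the universal property of $U_{m,n}$ (Proposition \ref{p:univ}) to turn the POVMs $(E_x^a)$ into a unital completely positive map and then composes with the partial-trace map (the paper writes this map in the manifestly completely positive sandwich form $s\mapsto\Tr_A(\varrho^{1/2}(\varphi(s)\otimes\un_d)\varrho^{1/2})$, which equals your $\Phi\circ\beta$ by cyclicity of $\Tr_A$ against $A$-local operators); the reverse inclusion likewise begins with Arveson's extension theorem. Where you diverge is in converting the extended map $\tilde\alpha:B(H)\to M_d(\bC)$ into a density matrix: the paper invokes St{\"o}rmer's Lemma 2.1, which says that $\phi(a\otimes b)=\Tr(\tilde\alpha(a)b^{\mathrm{t}})$ is a positive normal functional on $B(H)\otimes M_d(\bC)$, and reads off $\varrho$ from there --- note that the transpose $b^{\mathrm{t}}$ in that pairing is exactly the conjugation issue you flag, prepackaged into the cited lemma. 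Your route via Stinespring plus an explicit purification vector $\sum_i Ve_i\otimes e_i$ is the same Choi--Jamio{\l}kowski duality carried out by hand, and your fix (passing to the conjugate representation) is the standard and correct way to absorb the transpose; it is more self-contained but requires the care you describe. Your alternative of quoting the Schr{\"o}dinger--HJW identity $\cP_\mathrm{s}=\cQ_\mathrm{s}$ (which the paper states just before the theorem) is also legitimate and in fact shortens the reverse inclusion to the one-line observation that $(\alpha(p_{ax}))$ is an assemblage, at the cost of outsourcing all the analytic content to that cited equality.
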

\begin{proof}
%
Let $\cQ_\mathrm{s}'=\{(\alpha(p_{ax})):\,\alpha\in CP(U_{m,n},\bC^d)\}$.
Assume $\sigma\in\cQ_\mathrm{s}$. So 
\be \sigma(x|a)=\Tr_A(\varrho(E_x^a\otimes\un_d)),\ee  
where $E_x^a$ act on a Hilbert space $H_A$, $E_x^a\geq 0$, $\sum_aE_x^a=\un_A$ and $\varrho$ is a density matrix on $H_A\otimes\bC^d$. Let $\varphi:U_{m,n}\to B(H_A)$ be a unital completely positive map such that $\varphi(p_{ax})=E_x^a$. Now, let 
$\alpha:U_{m,n}\to M_d(\bC)$ be defined by
\be \alpha(s)=\Tr_A\left(\varrho^\frac{1}{2}(\varphi(s)\otimes\un_d)\rho^{\frac{1}{2}}\right),\qquad s\in U_{m,n}.\ee 
Clearly, $\alpha$ is completely positive and $\sigma(a|x)=\alpha(p_{ax})$ for every $a$ and $x$. Therefore, $\sigma\in\cQ_\mathrm{s}'$.

Conversely, assume that $\sigma\in\cQ_\mathrm{s}'$, i.e. $\sigma(a|x)=\alpha(p_{ax})$ for some completely positive $\alpha:U_{m,n}\to M_d(\bC)$. Let $H$ be a Hilbert space such that $C^*(Z_n^{*m})\subset B(H)$. Then, by Arveson extension theorem (\cite{A}) $\alpha$ can be extended to a completely positive map $\tilde{\alpha}:B(H)\to M_d(\bC)$. It follow from Lemma 2.1 in \cite{St} that
\be \phi(a\otimes b)=\Tr(\tilde{\alpha}(a)b^\mathrm{t}),\qquad a\in B(H),\,b\in M_d(\bC)\ee 
defines some positive normal functional on $B(H)\otimes M_d(\bC)$. Hence, there is a density matrix $\rho$ acting on $H\otimes\bC^d$ such that
$\phi(t)=\Tr(\rho t)$ for $t\in B(H)\otimes M_d(\bC)$. Now, one can show that
\be \tilde{\alpha}(a)=\Tr_H(\rho(a\otimes\un_d)),\qquad a\in B(H).\ee 
Let $\iota:U_{m,n}\to C^*(\bZ_n^{*m})$ be the canonical embedding, and let $E_x^a=\iota(p_{ax})$. Then we have
\be \sigma(a|x)=\alpha(p_{ax})=\tilde{\alpha}(p_{ax})=\Tr_H(\rho(E_x^a\otimes\un_d)).\ee 
Hence, $\sigma\in\cQ_\mathrm{s}$.


\end{proof}

Let $(V_{m,n}\ot M_d(\bC))_1=\{(A_{xa}):\,\sum_a\Tr(A_{ax})=1\;\mbox{for any $x$}\}$.
Then we have
\begin{corollary}
We have the following equality
\be \cQ_s=(V_{m,n}\otimes_\mathrm{max} M_d(\bC))^+\cap (V_{m,n}\ot M_d(\bC))_1\ee 
\end{corollary}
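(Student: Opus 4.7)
The plan is to deduce the corollary as a direct consequence of the preceding theorem together with the standard operator-system duality identifying completely positive maps into $M_d(\bC)$ with positive elements of the maximal tensor product with the dual system. The theorem just proved already writes $\cQ_s$ as the image of $CP(U_{m,n},\bC^d)$ under evaluation on the generators $p_{ax}$; what remains is to recast this image in tensor-product language and check that the trace normalization matches.

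First I would invoke the canonical linear isomorphism
\be
U_{m,n}^{\mathrm d}\otimes M_d(\bC)\;\longrightarrow\;L(U_{m,n},M_d(\bC)),\qquad \phi\otimes M\longmapsto \bigl(u\mapsto \phi(u)M\bigr),
\ee
which is well defined because $U_{m,n}$ is finite-dimensional. Using Proposition \ref{p:Ud} to rewrite $U_{m,n}^{\mathrm d}\cong V_{m,n}$, an element $t\in V_{m,n}\otimes M_d(\bC)$, presented as a tuple $(A_{ax})_{a,x}$ with $A_{ax}\in M_d(\bC)$ via the inclusion $V_{m,n}\subset l_{mn}^\infty$, corresponds to the linear map $\alpha_t:U_{m,n}\to M_d(\bC)$ determined by $\alpha_t(p_{ax})=A_{ax}$.

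The key ingredient is the duality result for finite-dimensional operator systems (compare \cite[Theorem 5.1]{FKPT} and \cite[Proposition 1.8]{FP}): under the identification above, the cone of completely positive maps $U_{m,n}\to M_d(\bC)$ coincides with the positive cone of $V_{m,n}\otimes_{\mathrm{max}} M_d(\bC)$. Equivalently, $\alpha_t$ is completely positive if and only if $t\in (V_{m,n}\otimes_{\mathrm{max}} M_d(\bC))^+$. This step is where the definition of the max tensor product is being used essentially, and locating the precise form of this duality in the literature is the only subtle point of the proof.

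Finally I would match the normalizations. Since $\sum_{a=0}^{n-1}p_{ax}=\jed$ in $U_{m,n}$ for every $x$, the requirement $\Tr\,\alpha_t(\jed)=1$ in the definition of $CP(U_{m,n},\bC^d)$ translates into $\sum_a\Tr(A_{ax})=1$ for every $x$, which is exactly the condition defining $(V_{m,n}\otimes M_d(\bC))_1$. Combining the preceding theorem with the identification of CP maps and the positive cone of $V_{m,n}\otimes_{\mathrm{max}} M_d(\bC)$, together with this trace-condition translation, yields the stated equality.
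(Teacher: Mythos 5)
Your argument is correct and matches the intended derivation: the paper states this corollary without proof, and the only route is exactly the one you take, namely combining the preceding theorem with the identification of $CP(U_{m,n},\bC^d)$ with $M_d(U_{m,n}^{\mathrm d})^+\cong M_d(V_{m,n})^+=(V_{m,n}\otimes_{\mathrm{max}}M_d(\bC))^+$ (using Proposition \ref{p:Ud}, the paper's lemma $M_k(V)^+=(M_k(\bC)\otimes_{\mathrm{min}}V)^+$, and the coincidence of min and max when one factor is a matrix algebra) together with the translation of $\Tr\,\alpha(\jed)=1$ into the normalization condition. The only cosmetic remark is that you need not appeal to a general max/min duality theorem: the definition of the dual matrix ordering already says that $\alpha$ is completely positive iff $(\alpha(p_{ax}))_{a,x}\in M_d(V_{m,n})^+$.
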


\section{Bell and steering inequalities in operator systems framework}
One of the main tasks while studying nonlocality is to quantify the difference between the set $\cL$ and $\cQ$. Having the characterization of $\cQ$ given in section 4 one can ask whether is possible to find useful tools for this in the framework of operator systems.

Let $V$ be a real linear space and let $e\in V$ be a distinguished nonzero element. Moreover, let $C\subset V^\mathrm{d}$ be a cone so that $\phi(e)>0$ for every $\phi\in C$. Now, for any $v\in V$ let us define the number $N_C(v)$ by the formula
\begin{equation}
N_C(v)=\sup\left\{\dfrac{\phi(v)}{\phi(e)}:\phi\in C\right\}.
\end{equation}
Then we have
\begin{proposition}
Let $v,w\in V$ and $\alpha\in \bR$. Then
\begin{enumerate}
\item[(i)] $N_C(\alpha e)=\alpha$,
\item[(ii)] $N_C(\alpha v)=\alpha N_C(v)$,
\item[(iii)] $N_C(v+w)\leq N_C(v)+N_C(w)$.
\end{enumerate}
\end{proposition}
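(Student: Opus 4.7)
The proposition is essentially the statement that $N_C$ behaves like a (possibly infinite, one-sided) gauge/sublinear functional on $V$, inherited from the pointwise supremum of the ratios $\phi(\,\cdot\,)/\phi(e)$. So the plan is simply to push the supremum through the linear identities available from the linearity of each $\phi$.

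First, for (i), I would just compute directly: for every $\phi\in C$ one has $\phi(\alpha e)/\phi(e)=\alpha\phi(e)/\phi(e)=\alpha$, so the set over which the supremum is taken reduces to the singleton $\{\alpha\}$, giving $N_C(\alpha e)=\alpha$. This also uses crucially the standing hypothesis $\phi(e)>0$ so that the quotient is defined.

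Next, for (ii), I would separate the two cases $\alpha\ge 0$ and $\alpha=0$ (and, if the paper really means arbitrary real $\alpha$, I would flag in a remark that for $\alpha<0$ one needs either $C=-C$ or should read this as a one-sided scaling, since $\sup$ and multiplication by a negative scalar do not commute). For $\alpha>0$, linearity of $\phi$ gives $\phi(\alpha v)/\phi(e)=\alpha\,\phi(v)/\phi(e)$, and multiplication of every element of a set by a positive scalar scales the supremum by that scalar, so $N_C(\alpha v)=\alpha N_C(v)$. The case $\alpha=0$ is immediate from $N_C(0)=0$.

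Finally, for (iii), I would use linearity once more to write, for each $\phi\in C$,
\begin{equation}
\frac{\phi(v+w)}{\phi(e)}=\frac{\phi(v)}{\phi(e)}+\frac{\phi(w)}{\phi(e)}\le N_C(v)+N_C(w),
\end{equation}
and then take the supremum over $\phi\in C$ on the left-hand side. The inequality survives because the right-hand side is a constant upper bound independent of $\phi$. (If either $N_C(v)$ or $N_C(w)$ equals $+\infty$ the inequality is trivial, so in writing the proof I would only need to handle the finite case explicitly.)

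No real obstacle is expected; the only point worth being careful about is the homogeneity claim (ii) for negative scalars, which should probably be read as holding for $\alpha\ge 0$ (or else replaced by $N_C(\alpha v)=|\alpha|N_C(\mathrm{sgn}(\alpha)v)$), and the tacit convention about $+\infty$ in (iii). Everything else is a one-line consequence of linearity of the functionals in $C$ together with the elementary fact that the supremum is sublinear.
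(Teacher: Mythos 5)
Your proof is correct and is essentially identical to the paper's, which dispatches the proposition with the single phrase ``Direct calculations.'' Your caveat about item (ii) is well taken: since a supremum does not commute with multiplication by a negative scalar, the homogeneity $N_C(\alpha v)=\alpha N_C(v)$ should indeed be read as holding for $\alpha\geq 0$ only, a restriction the paper leaves implicit.
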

\begin{proof}
Direct calculations.
\end{proof}
Next, consider two cones $C_1\subset V^d$ and $C_2\subset V^d$. We would like to apply the above notion to compare sizes of this two cones. 
Firstly, let us formulate rather obvious 
\begin{lemma}
We have the inclusion $C_1\subset C_2$ if and only if $N_{C_1}(v)\leq N_{C_2}(v)$ for every $v\in V$. Moreover, if $N_{C_1}(v)<N_{C_2}(v)$ for some $v\in V$, then the inclusion is proper.
\end{lemma}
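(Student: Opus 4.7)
The plan is to treat the biconditional in two separate directions and then to obtain the ``moreover'' part as a direct corollary of the equivalence.

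For the forward implication of the equivalence, I would observe that it reduces to monotonicity of the supremum: if $C_1\subseteq C_2$, then for any fixed $v\in V$ the set $\{\phi(v)/\phi(e):\phi\in C_1\}$ is contained in $\{\phi(v)/\phi(e):\phi\in C_2\}$, and taking suprema yields $N_{C_1}(v)\leq N_{C_2}(v)$. This step is essentially immediate and requires no structural assumption on the cones.

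The nontrivial half is the converse, which I would do by contrapositive. Assume $\phi_0\in C_1\setminus C_2$. In the intended operator-system context the cones $C_2$ are closed convex (as cones of positive functionals), so the Hahn--Banach separation theorem produces $v\in V$ with $\phi_0(v)>0$ while $\phi(v)\leq 0$ for every $\phi\in C_2$; the separating constant is forced to be $0$ because $C_2$ is a cone through the origin. Since by hypothesis $\phi(e)>0$ on $C_1\cup C_2$, dividing by $\phi(e)$ preserves signs and gives
\begin{equation}
N_{C_2}(v)=\sup_{\phi\in C_2}\frac{\phi(v)}{\phi(e)}\leq 0<\frac{\phi_0(v)}{\phi_0(e)}\leq N_{C_1}(v),
\end{equation}
contradicting $N_{C_1}\leq N_{C_2}$. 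Thus every $\phi_0\in C_1$ must lie in $C_2$, proving $C_1\subseteq C_2$.

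The strict-inclusion assertion is then a short consequence of the equivalence. Suppose $N_{C_1}(v_0)<N_{C_2}(v_0)$ for some $v_0$. By the equivalence already proved, the inequality $N_{C_2}\leq N_{C_1}$ would force $C_2\subseteq C_1$; as it fails at $v_0$, we cannot have $C_2\subseteq C_1$, so in particular $C_1\neq C_2$ and the inclusion $C_1\subsetneq C_2$ is proper. The main obstacle is the backward direction of the equivalence, since it is not purely algebraic and genuinely needs a separation argument and hence a (mild) topological hypothesis on $C_2$; in the setting of closed positive cones in duals of finite-dimensional operator systems this is automatic, so no extra assumptions are needed in the paper.
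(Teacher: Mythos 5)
The paper offers no proof of this lemma at all (it is introduced as ``rather obvious''), so the comparison is necessarily with what the authors must have had in mind rather than with a written argument. Your forward implication and your treatment of the ``moreover'' clause are correct and are surely the intended content: monotonicity of the supremum gives $C_1\subseteq C_2\Rightarrow N_{C_1}\leq N_{C_2}$, and if $C_1=C_2$ then $N_{C_1}=N_{C_2}$ identically, so a strict inequality at a single point rules out equality. (One small remark on your wording there: you invoke ``$N_{C_2}\leq N_{C_1}$ would force $C_2\subseteq C_1$'' and then deny the antecedent; the clean inference is the contrapositive of the \emph{easy} direction, $C_2\subseteq C_1\Rightarrow N_{C_2}\leq N_{C_1}$. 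Since you have the full equivalence this is harmless, but it is worth noting that the ``moreover'' part — the only part the paper actually uses, to certify that a Bell functional witnesses $C_1\subsetneq C_2$ — needs no separation argument at all.) Your converse direction is where you go beyond the paper, and you are right that it is not purely algebraic: as stated for arbitrary cones the claim is in fact false. Take $V=\bR^2$, $e=(1,0)$, $C_2=\{(\lambda,\mu):\lambda>0,\ \mu>0\}$ and $C_1=\{(\lambda,0):\lambda>0\}$; then $N_{C_1}(v)=v_1\leq N_{C_2}(v)$ for all $v$, yet $C_1\cap C_2=\emptyset$. So the hypothesis you add — $C_2$ convex and closed in the weak-$*$ topology induced by $V$, so that Hahn--Banach separation produces a separating element of $V$ itself and the cone structure pushes the separating constant to $0$ — is genuinely needed, and it does hold in the paper's applications, where $C_2=(V_{m,n}\otimes_{\mathrm{max}}V_{m,n})^+$ is a closed convex cone in a finite-dimensional space. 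In short: your proof is correct under the (necessary) closedness assumption, and it is more careful than the paper, which asserts the biconditional without qualification.
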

Now, let us propose the following definition
\begin{definition}
Let $C_1\subset C_2\subset V^d$. We say that an element \textit{$v\in V$ violates $C_1$ with $C_2$} if $0< N_{C_1}(v)<N_{C_2}(v)$. The number
\be LV_{C_1\subset C_2}(v)=\dfrac{N_{C_1}(v)}{N_{C_2}(v)}\ee 
will be called \textit{the largest violation of $v$.}
\end{definition}

Now, let us pass to the main example. Let $V=U_{m,n}\ot U_{m,n}$ and $e=\un$.
Let us observe, that Definition \ref{d:box}(2) leads to the following observation
\begin{proposition}
The set LHV correlation boxes is equal to the set of normalized functionals from $V_{m,n}^+\ot V_{m,n}^+$.
\end{proposition}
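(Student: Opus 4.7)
The plan is to exploit the duality $U_{m,n}^{\mathrm d}\cong V_{m,n}$ of Proposition \ref{p:Ud}, which converts positive functionals on $U_{m,n}$ into elements of $V_{m,n}^+$ and vice versa. Under this identification, a functional $\phi\in V_{m,n}^+$ satisfying $\phi(\jed)=1$ is exactly a tuple $(z_{ax})$ with $z_{ax}\geq 0$ and $\sum_a z_{ax}=1$ independently of $x$, i.e.\ a family $\{P(\,\cdot\,|x)\}_{x=1}^m$ of probability distributions on $\{0,\ldots,n-1\}$.

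First I would prove the inclusion ``LHV $\subset$ normalized separable.'' Given $P(ab|xy)=\sum_\lambda r_\lambda P_\lambda(a|x)Q_\lambda(b|y)$ with $r_\lambda\geq 0$, $\sum_\lambda r_\lambda=1$, and $P_\lambda(\,\cdot\,|x)$, $Q_\lambda(\,\cdot\,|y)$ probability distributions, define $\phi_\lambda,\psi_\lambda\in V_{m,n}^+$ by $\phi_\lambda(p_{ax})=P_\lambda(a|x)$ and $\psi_\lambda(p_{by})=Q_\lambda(b|y)$; by the discussion above both are positive functionals on $U_{m,n}$ with $\phi_\lambda(\jed)=\psi_\lambda(\jed)=1$. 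The functional $\varrho=\sum_\lambda r_\lambda\,\phi_\lambda\otimes\psi_\lambda$ then lies in $V_{m,n}^+\otimes V_{m,n}^+$, satisfies $\varrho(\jed\otimes\jed)=\sum_\lambda r_\lambda=1$, and $\varrho(p_{ax}\otimes p_{by})=P(ab|xy)$.

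Next I would prove the reverse inclusion. Suppose $\varrho=\sum_{i=1}^N\varphi_i\otimes\psi_i$ is a normalized element of $V_{m,n}^+\otimes V_{m,n}^+$. Set $c_i=\varphi_i(\jed)\psi_i(\jed)\geq 0$. Since the order unit dominates every element, $\varphi_i(\jed)=0$ forces $\varphi_i=0$ (and likewise for $\psi_i$), so we may discard all indices with $c_i=0$. For the remaining indices set
\be
r_i=c_i,\quad P_i(a|x)=\frac{\varphi_i(p_{ax})}{\varphi_i(\jed)},\quad Q_i(b|y)=\frac{\psi_i(p_{by})}{\psi_i(\jed)}.
\ee
Each $P_i(\,\cdot\,|x)$ and $Q_i(\,\cdot\,|y)$ is a probability distribution by the normalization argument of the first paragraph, the weights $r_i$ are nonnegative and sum to $\varrho(\jed\otimes\jed)=1$, and the resulting box $P(ab|xy)=\varrho(p_{ax}\otimes p_{by})=\sum_i r_i P_i(a|x)Q_i(b|y)$ is manifestly LHV.

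The argument is essentially bookkeeping on top of Proposition \ref{p:Ud}, so there is no serious obstacle; the only point requiring mild care is the separate renormalization of each factor $\varphi_i,\psi_i$ in the reverse direction, which is handled by noting that vanishing of $\varphi_i(\jed)$ forces $\varphi_i=0$ and so the decomposition can be cleaned up without loss of generality before rescaling.
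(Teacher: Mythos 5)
Your proof is correct and is essentially the argument the paper intends: the paper states this proposition as an immediate observation from Definition of LHV boxes together with the duality $U_{m,n}^{\mathrm d}\cong V_{m,n}$ of Proposition \ref{p:Ud}, and omits the details you supply. The one point genuinely requiring care, the renormalization of each factor in the reverse inclusion (discarding terms with $\varphi_i(\jed)=0$, which forces $\varphi_i=0$ since $\jed$ is an order unit), is handled correctly.
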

Thus, let $C_1=V_{m,n}^+\ot V_{m,n}^+$ and $C_2=(V_{m,n}\ot_\mathrm{max} V_{m,n})^+$. Obviously, $C_1\subset C_2$.
Following \cite{JP1,JP2} we formulate the following
\begin{definition}
A \textit{Bell inequality} (or \textit{Bell functional}) is a set of numbers $t=\{t_{abxy}:\,a,b=0,1,\ldots,m-1,\;x,y=1,2,\ldots,n\}$.
For such $t$ and every correlation box $(p(ab|xy))$ one can define 
\be \langle t,p\rangle=\sum_{a,b,x,y}t_{abxy}p(ab|xy).\ee 
\end{definition}
Let us observe that every Bell inequality $t=\{t_{abxy}\}$ determines some element
$\sum_{a,b,x,y}t_{abxy}p_{ax}\ot p_{by}\in U_{m,n}\ot U_{m,n}$ which will be denoted also by $t$. Now we can formulate the following
\begin{definition}
\textit{The largest violation} $LV(t)$ of a Bell inequality $t\in U_{m,n}\ot U_{m,n}$ is 
defined as
\be LV(t)=LV_{V_{m,n}^+\ot V_{m,n}^+\subset (V_{m,n}\ot_\mathrm{max}V_{m,n})^+}(t).\ee 
\end{definition}
\begin{remark}
The numbers $N_{V_{m,n}^+\ot V_{m,n}^+}(t)$ and $N_{(V_{m,n}\ot_\mathrm{max}V_{m,n})^+}(t)$ can be interpreted respectively as classical bound and quantum bound of $t$ (see \cite{JP2}).
\end{remark}

The same can be done for steering scenario (see \cite{Pus,YMH}). Namely, we can state the following
\begin{definition}
A \textit{steering inequality} (or \textit{steering functional}) is a set of matrices $F=\{F_{ax}\in M_d(\bC):\,a=0,1,\ldots,m-1,\;x=1,2,\ldots,n\}$.
For such $F$ and every assemblage $(\sigma(a|x))$ one can define 
\be \langle F,\sigma\rangle=\sum_{a,x}\Tr (F_{ax}\sigma(a|x)).\ee 
\end{definition}
As a steering functional $F$ can be viewed as element $\sum_{a,x}p_{ax}\ot F_{ax}\in U_{m,n}\ot M_d(\bC))$, 
the largest violation of a steering inequality can be defined as
\be LV(F)=LV_{V_{m,n}^+\ot M_d(\bC)^+\subset(V_{m,n}\ot_\mathrm{max}M_d(\bC))^+}(F).\ee 

\begin{example}
Let $n=m=2$. The space $V_{2,2}\ot V_{2,2}$ can be identified with the space $BS$ of $4\times 4$ matrices $(a_{ij})_{i,j=1,2,3,4}$ such that $a_{i1}+a_{i2}=a_{i3}+a_{i4}$ for every $i$, and $a_{1j}+a_{2j}=a_{3j}+a_{4j}$ for every $j$. Furthermore, entries of each matrix $(a_{ij})$ from $V_{2,2}^+\ot V_{2,2}^+$ have the form
$a_{ij}=\sum_\lambda q_\lambda b_i^\lambda c_j^\lambda$ for $i,j=1,2,3,4$, where $b_i$ and $c_j$ are numbers such that $b_1^\lambda+b_2^\lambda=b_3^\lambda+b_4^\lambda$ and $c_1^\lambda+c_2^\lambda=c_3^\lambda+c_4^\lambda$, and $q_\lambda\leq 0$ with $\sum_\lambda q_\lambda=1$. Let $t$ be a Bell inequality. It can be represented as $4\times 4$ matrix $(t_{ij})_{i,j=1,2,3,4}$.
Now, let us estimate the classical bound. For a normalized element $a=(a_{ij})\in V_{2,2}^+\ot V_{2,2}^+$ we have:
\begin{eqnarray}
|\langle t,a\rangle|&=& \Big|\sum_\lambda q_\lambda\sum_{i,j=1}^4b_i^\lambda c_j^\lambda t_{ij}\Big|\leq \sum_\lambda q_\lambda\Big|\sum_{i,j}b_i^\lambda t_{ij} c_j^\lambda\Big|\nonumber \\
&=&
\sum_\lambda q_\lambda\big|\langle b^\lambda|t|c^\lambda\rangle\big|\leq
\sum_\lambda q_\lambda \|b^\lambda\|_2\|t\|_\infty\|c^\lambda\|_2\leq 4\|t\|_\infty,
\end{eqnarray}
where $\|\cdot\|_2$ denotes usual euclidean norm, while $\|\cdot\|_\infty$ is the operator norm. The last inequality follows from the fact that $\|b^\lambda\|_2\leq \|b^\lambda\|_1=\sum_ib_i^\lambda=2$, and similarly for $c^\lambda$.

Now, let us choose a Bell inequality $t=(t_{ij})$, where $t_{ij}=1$ for every $i,j=1,2,3,4$.
For this Bell inequality we can improve the above estimation. Namely, one can easily show that $N_{V_{2,2}^+\ot V_{2,2}^+}(t)=4$. On the other hand, we use the characterization of positive elements in $(V_{2,2}\ot_\mathrm{max}V_{2,2})^+$ given in \cite[Proposition 6.8]{FKPT}. It says that for each such element $(a_{ij})$ there is $p\in\bN$ and matrices $A_i,B_j\in M_p(\bC)$ such that $a_{ij}=\Tr(A_iB_j)$ for $i,j=1,2,3,4$. Let $p=2$ and let us consider the following vectors from $\bC^2$:
\be e_1=(1,0),\quad e_2=(0,1),\quad f_1=\left(\frac{\sqrt{2}}{2},\frac{\sqrt{2}}{2}\right),\quad f_2=\left(-\frac{\sqrt{2}}{2},\frac{\sqrt{2}}{2}\right).\ee 
Define now
\ben
A_1=A_3=\frac{1}{2}|e_1\rangle\langle e_1|,\quad A_2=A_4=\frac{1}{2}|e_2\rangle\langle e_2|, \nonumber \\
B_1=B_3=\frac{1}{2}|f_1\rangle\langle f_1|,\quad B_2=B_4=\frac{1}{2}|f_2\rangle\langle f_2|.
\een
Then the determined element $(a_{ij})\in(V_{2,2}\ot_\mathrm{max}V_{2,2})^+$ is of the form $a_{ij}=\frac{\sqrt{2}}{8}$ 
for every $i,j=1,2,3,4$. Thus
\be \langle t,a\rangle =\sum_{i,j}t_{ij}a_{ij}=16\cdot\frac{\sqrt{2}}{8}=4\sqrt{2}.\ee 
Therefore, 
\be N_{(V_{2,2}\ot_\mathrm{max}V_{2,2})^+}(t)\geq 4\sqrt{2},\ee 
and consequently
\be LV(t)\geq\sqrt{2}.\ee 
\end{example}


{\bf Acknowledgemenets}
M.H. thanks Robert Alicki for discussion, while M.M. and Y.Z. thank Vern Paulsen for valuable remarks on tensor products of operator systems. The work is supported by
Foundation for Polish Science TEAM project co-financed by the EU European Regional Development
Fund, Polish Ministry of Science and Higher Education Grant no. IdP2011 000361, ERC AdG grant QOLAPS and  EC grant RAQUEL.
Part of this work was done in National Quantum Information Center of
Gda{\'n}sk. Part of this work was done when the authors attended the program “Mathematical
Challenges in Quantum Information” at the Isaac Newton Institute for Mathematical
Sciences, University of Cambridge.


\begin{thebibliography}{99}
\bibitem{A}
W. Arveson,
Subalgebras of C *-algebras, Acta Math. 123 (1969), 141--224.
\bibitem{CE}
M.-D. Choi and E.G. Effros, Injectivity and operator spaces, J. Funct. Anal. 24 (1977), 156--209.

\bibitem{FKPT}
D. Farenick, A.S. Kavruk, V.I. Paulsen and I.G. Todorov, Operator systems from discrete groups, Comm. Math. Phys. 329 (2014), 207--238. 
\bibitem{FP}
D. Farenick and V.I. Paulsen, Operator system quotients of matrix algebras and their tensor products, Math. Scand. 111 (2012), 210--243.
\bibitem{F1}
T. Fritz, Operator system structures on the unital direct sum of $C^*$-algebras, Rocky Mountain J. Math. 44(3) (2014), 913--936.

\bibitem{F2}
T. Fritz, Tsirelson's Problem and Kirchberg's Conjecture, Rev. Math. Phys. 24(5) (2012), 1250012.

\bibitem{HJW}
L.P. Hughston, R. Jozsa and W.K. Wootters, A complete classification of quantum ensembles having a given density matrix, Phys. Lett. A 183 (1993), 14--18.

\bibitem{JP1} M. Junge, C. Palazuelos, D. P\'{e}rez-Garc\'{i}a, I. Villanueva, M. M. Wolf.
\newblock Unbounded violations of bipartite Bell inequalities via operator space theory.
\newblock {\it Comm. Math. Phys.} 300, (2010) 715-739.

\bibitem{JP2} M. Junge and C. Palazuelos.
\newblock Large violation of Bell inequalities with low entanglement.
\newblock {\it Comm. Math. Phys.} 306, (2011) 695-746.

\bibitem{Kav}
A.S. Kavruk, The weak expectation property and Riesz interpolation, preprint available at arXiv:1201.5414.

\bibitem{tens}
A. Kavruk, V.I. Paulsen, I.G. Todorov and M. Tomforde, Tensor products of operator systems, J. Funct. Anal. 261 (2011), 267--299.

\bibitem{KW}
E. Kirchberg and S. Wasserman, $C^*$-algebras generated by operator systems, J. Funct. Anal. 155 (1998), 324--352.




\bibitem{Masanes}
L. Masanes, Extremal quantum correlations for N parties with two dichotomic observables per site, arXiv:quant-ph/0512100

\bibitem{NPA2008}M. Navascu\'{e}s, S. Pironio and A. Ac\'{\i}n,
{\it A convergent hierarchy of semidefinite programs characterizing the set of quantum correlations,}
New J. Phys. {\bf 10} (2008) 073013.

\bibitem{chrom}
V. I. Paulsen and I. G. Todorov, Quantum chromatic numbers via operator systems,
preprint, arXiv:1311.6850, 2013.
\bibitem{Pus}
M.F. Pusey, Negativity and steering: a stronger Peres conjecture, preprint available at arXiv:1305.1767.

\bibitem{St}
E. St{\"o}rmer,
Extensions of positive maps into $B(H)$,
J. Funct. Anal. 66 (1986), 235--254.

\bibitem{T1}
B.S. Tsirelson, Quantum generalizations of Bell's inequality, Lett. Math. Phys. 4 (1980), 93--100.
\bibitem{T2}
B.S. Tsirelson, Some results and problems on quantum Bell-type inequalities, Hadronic J. Suppl. 8 (1993), 329--345.

\bibitem{WernerScholz}
V. B. Scholz, R. F. Werner, Tsirelson's Problem, arXiv:0812.4305

\bibitem{VidickWehner}
T. Vidick and S. Wehner Phys. Rev. A 83, (2011) 052310; arXiv:1011.5206. 


\bibitem{YMH}
Z. Yin, M. Marciniak and M. Horodecki, Operator space approach to steering inequality, arXiv:1405.1945
\end{thebibliography}
\end{document}